\documentclass{article}
\usepackage{amsmath}
\usepackage{amsthm}
\usepackage{amsmath,bm}
\usepackage{amssymb}
\usepackage{color}
\usepackage{xcolor}
\usepackage{lineno}

\usepackage[inner=2cm,outer=1.5cm]{geometry}
\usepackage[colorlinks=true,linkcolor=blue]{hyperref}

\newtheorem{theorem}{Theorem}[section]

\newtheorem{theorem2}[theorem]{Remark}
\newtheorem{theorem3}[theorem]{Lemma}

\newtheorem{corollary}[theorem]{Corollary}

\def\g{\gamma}
\def\eps{\bm \epsilon}
\def\sig{\bm \sigma}

\def\ba{\bm \alpha}
\def\bv_0{\bm \psi}
\def\bU{\boldsymbol U}
\def\bUd{\tilde{\boldsymbol U}}
\def\bV{\boldsymbol V}
\def\bVd{\tilde{\boldsymbol V}}
\def\bP{\boldsymbol P}
\def\bW{\boldsymbol W}
\def\bP{\boldsymbol P}
\def\bu{\boldsymbol u}
\def\bv{\boldsymbol v}
\def\bp{\boldsymbol p}
\def\bw{\boldsymbol w}
\def\bz{\boldsymbol z}
\def\bq{\boldsymbol q}
\def\bn{\boldsymbol n}
\def\bx{\boldsymbol x}

\def\bf{\boldsymbol f}

\def\bg{\boldsymbol g}

\def\bbf{\boldsymbol b}
\def\bK{\boldsymbol K}

\def\bM{\mathcal M}
\def\bL{\mathcal L}
\def\bD{\mathcal D}
\def\bF{\mathcal F}
\def\bG{\mathcal G}
\def\bH{\mathcal H}
\def\bA{\mathcal A}
\def\br{\boldsymbol r}
\def\bs{\boldsymbol s}
\def\by{\boldsymbol y}
\def\divv{\text{div}}

\def\Divv{\text{Div}}
\def\Divu{\text{\underline{Div}}}

\newcommand{\msp}{\hspace{-0.2em}}
\def\bud{\dot{\boldsymbol u}}
\def\bvd{\dot{\boldsymbol v}}
\def\bwd{\tilde{\boldsymbol w}}
\def\bzd{\tilde{\boldsymbol z}}
\providecommand{\keywords}[1]
{
	\small	
	\textbf{\textit{Keywords---}} #1
}

\begin{document}

\title{On the stability of time-discrete dynamic multiple network poroelasticity systems
arising from second-order implicit time-stepping schemes}
\author{Fadi Philo}
\maketitle

\begin{abstract}
The classical Biot's theory provides the foundation of a fully dynamic poroelasticity model describing the
propagation of elastic waves in fluid-saturated media. Multiple network poroelastic theory (MPET) takes
into account that the elastic matrix (solid) can be permeated by one or several ($n\ge1$) superimposed
interacting single fluid networks of possibly different characteristics; hence the single network (classical Biot)
model can be considered as a special case of the MPET model.

We analyze the stability properties of the time-discrete systems arising from second-order implicit time stepping
schemes applied to the variational formulation of the MPET model and prove an inf-sup condition with a constant
that is independent of all model parameters. Moreover, we show that the fully discrete models obtained for a family
of strongly conservative space discretizations are also uniformly stable with respect to the spatial discretization
parameter. The norms in which these results hold are the basis for parameter-robust preconditioners.
\end{abstract}

\keywords{Fully dynamic Biot model, multiple network poroelastic theory, MPET equations, second-order implicit time
stepping scheme, inf-sup stability, parameter-robust preconditioners}

\section{Fully dynamic poroelasticity models: The continuous case}
In this section we formulate the continuous dynamic models whose stable and mass-conservative discretization we will address in the further course of this work.
\subsection{The dynamic Biot model}
Let us start with the single network model, which we will also refer to as dynamic Biot problem. For an open domain
$\Omega \subset \mathbf{R}^d, d= 2,3$, the unknown physical variables in the dynamic Biot problem we are going
to consider are the displacement $\bu$ of the solid matrix occupying~$\Omega$,
the relative displacement $\bw:=\varphi(\bv-\bu)$ of the fluid, denoting by $\bv$ the displacement of the fluid and by
$\varphi \in (0,1)$ the porosity of the solid\footnote{According to~\cite{Biot1955theory} the porosity $\varphi$ is defined
as $\varphi = \frac{V_p}{V_b}$ where $V_p$ is the volume of the pores contained in a sample of bulk volume $V_b$.},
and the fluid pressure $p$, cf.~\cite{Biot1955theory}.

In the regime of linear elasticity (assuming Hook's law) we have the relations
\begin{subequations}
\begin{align}
&\sig(\bu)= 2\mu\eps(\bu)+ \lambda \divv(\bu) \textbf{I} ,\label{linE1} \\
&\eps(\bu)= \frac{1}{2}\left(\nabla\bu+(\nabla \bu)^T\right) \label{linE2}
\end{align}
\end{subequations}
between the total stress $\sig=\sig(\bu)$, the strain $\eps=\eps(\bu)$ and the displacement field $\bu$. Defining
the total density $\rho$ of the fluid-saturated porous medium by
\begin{equation}\label{rho_tot}
\rho:= \varphi \rho_f + (1-\varphi) \rho_s
\end{equation}
in terms of the fluid density $\rho_f$ and the solid density $\rho_s$, the first equation of motion reads
\begin{equation}\label{eqM1}
-\divv\sig + \rho \ddot{\bu}  + \rho_f \ddot{\bw} +\tilde{\alpha} \nabla p =  \tilde{\bf}, \quad \text{in } \Omega\times(0,T)
\end{equation}
where $\tilde{\alpha}\in[\varphi,1]$ denotes the Biot-Willis parameter\footnote{For physical reasons it is natural to assume
that $\varphi \leq \tilde{\alpha} \leq 1$, cf.~\cite{Biot1957elastic}.} and $\tilde{\bf}$ the body force density, cf.~\cite{Biot1957elastic}.

The second equation of motion, describing the momentum balance of the fluid component, is given by
\begin{equation}\label{eqM2}
\rho_f \ddot{\bu}+\rho_{m_f} \ddot{\bw} + \bK^{-1} \dot{\bw} +\nabla p = - \tilde{{\bf}}_f, \quad \text{ in } \Omega\times(0,T)
\end{equation}
where $\rho_{m_f} := \rho_f/\varphi$ is the effective fluid density, $\bK:=\bm{\kappa}/\eta$ the hydraulic conductivity of
the medium for a fluid with viscosity $\eta$, and $\bm{\kappa}$ the permeability tensor, which for simplicity here will be
assumed to be of the form $\bm{\kappa} = \kappa \bm{I}$ for a scalar permeability coefficient $\kappa$.\footnote{The
latter assumption is valid for isotropic porous media, cf.~\cite{Biot1941general}.}
If the only body forces are due to gravity then the total body force and fluid body force are given by $\tilde{\bf} = \rho \bbf$
and $\tilde{\bf}_f =\rho_f \bbf$ with $\bbf$ denoting the gravitational acceleration.

The system is closed by the mass conservation equation
\begin{equation}\label{eqM3}
-\tilde{\alpha}\dot{\divv \bu} -\dot{\divv \bw} -c_{p} \dot{p} = 0, \quad \text{in } \Omega\times(0,T)
\end{equation}
where $c_p$ is the constrained specific storage coefficient.
Note that if both the elastic solid and the fluid are incompressible one has $c_p=0$, cf~\cite{Biot1962}, a situation which is also covered by
the analysis presented in section~\ref{sec:stability} which provides stability in this case as well.

For symmetry reasons, it is convenient to transform the equations~\eqref{eqM1}--\eqref{eqM3} into an equivalent
system which after variational formulation produces a saddle point problem in each step of an implicit time integration
method. This can be achieved by first inserting the right hand side of the definition $\bw:=\varphi(\bv-\bu)$ of $\bw$
in~\eqref{eqM1}--\eqref{eqM3}, then multiplying equation~\eqref{eqM2} with $-\varphi$ and adding it to
equation~\eqref{eqM1} to obtain a new equation replacing~\eqref{eqM1}. Finally, substituting $\varphi \bv$ with $\bv$
and denoting $0 \le \alpha := \tilde{\alpha} - \varphi$ one ends up with the new system
\begin{subequations}\label{eq:Biot}
\begin{align}
-\divv\sig + ((1-\varphi)\rho_s - \varphi \rho_f+\varphi^2 \rho_{m_f}) \ddot{\bu} 
+ \varphi^2 \bK^{-1} \dot{\bu}
+ (\rho_f  -\varphi \rho_{m_f}) \ddot{\bv}  - \varphi \bK^{-1} \dot{\bv}
+\alpha \nabla p &= \bf, \label{eq:Biot_a} \\
(\rho_f  - \varphi \rho_{m_f}) \ddot{\bu} - \varphi \bK^{-1}\dot{\bu}+\rho_{m_f} \ddot{\bv} + \bK^{-1}\dot{\bv}+\nabla p 
&= \bg,  \qquad  \label{eq:Biot_b}\\
-\alpha \dot{\divv \bu}  -\dot{\divv \bv}  -c_{p}\dot{p} &= 0. \label{eq:Biot_c}
\end{align}
\end{subequations}
where we have also used the notation $\bf:=\tilde{\bf} + \varphi \tilde{\bf}_f$ and $\bg:=-\tilde{\bf}_f$.

The dynamic Biot problem \eqref{eq:Biot} has to be complemented by proper initial conditions at time $t=t_0$,
e.g., prescribing $\bu(\bx,0)=\bu^{(0)}(\bx)$, $\bv(\bx,0)=\bv^{(0)}(\bx)$, $p(\bx,0)=p^{(0)}(\bx)$,
$\dot{\bu}(\bx,0)=\bu^{(1)}(\bx)$, $\dot{\bv}(\bx,0)=\bv^{(1)}(\bx)$ at time $t_0=0$
as well as proper boundary conditions at any time $t>t_0$, e.g.,
\begin{subequations}\label{eq:MPET_BC}
	\begin{eqnarray}
	p(\bx,t) &=& p_{D}(\bx,t)  \quad \mbox{for } \bx \in \Gamma_{p,D}, \quad t > 0,\\ 
	\bK \frac{\partial p (\bx)}{\partial \bn} &=& q_{N}(\bx,t)  \quad \mbox{for }  \bx \in \Gamma_{p,N}, \quad t > 0,\\ 
	\bu(\bx,t) &=& {\bu}_D(\bx,t)  \quad \mbox{for }  \bx \in \Gamma_{\bu,D}, \quad t > 0, \\ 
	({\sig(\bx,t)}-\alpha p  \bm I) \, {\bn} (\bx) &=& {\bm g}_N(\bx,t) \quad \mbox{for }  \bx \in \Gamma_{\bu,N}, \quad t > 0,
	\end{eqnarray}
\end{subequations}
where
$\Gamma_{p,D} \cap \Gamma_{p,N} = \emptyset$,
$\overline{\Gamma}_{p,D}\cup \overline{\Gamma}_{p,N}=\Gamma=\partial{\Omega}$
and
$\Gamma_{\bu,D} \cap \Gamma_{\bu,N} = \emptyset$,
$\overline{\Gamma}_{\bu,D} \cup \overline{\Gamma}_{\bu,N}=\Gamma$. 
A more detailed derivation of the system~\eqref{eq:Biot} and some fundamental results regarding its well-posedness
can be found in~\cite{Dafermos1968existence,Carlson1972linear,Zienkiewicz1982basic,Showalter2000diffusion}.

In compact notation, the system~\eqref{eq:Biot_a}--\eqref{eq:Biot_c} can be written in the form
\begin{equation}\label{eq:dyn_sys}
\bM \, \ddot{\by} + \bD \dot{\by} + \bL \by = \bF
\end{equation}
with operators $\bM$, $\bD$, $\bL$, right hand side $\bF$, and unknown vector $\by$ given by
\begin{subequations}
\begin{align}
\bM&=
\begin{bmatrix}
((1-\varphi) \rho_s - \varphi \rho_f+ \varphi^2 \rho_{m_f}) I & (\rho_f  -\varphi \rho_{m_f}  ) I & 0 \\
(\rho_f  - \varphi \rho_{m_f}) I & \rho_{m_f} I & 0 \\
0 & 0 & 0
\end{bmatrix}, \quad \label{eq:M} \\
\bD&=
\begin{bmatrix}
\varphi^2 \bK^{-1}  & - \varphi \bK^{-1} & 0 \\
- \varphi \bK^{-1}  & \bK^{-1} & 0\\
-\alpha \divv & - \divv & - c_p I
\end{bmatrix},  \quad \label{eq:D}  \\
\bL&=
\begin{bmatrix}
- 2\mu\divv\eps -\lambda \nabla\divv  & 0 & \alpha \nabla \\
0  & 0& \nabla\\
0 & 0& 0
\end{bmatrix},  \quad \label{eq:L} 
\end{align}
\end{subequations}
and
\begin{equation}\label{eq:Fr1}
\bF=
\begin{bmatrix}
\bf \\ \bg \\ 0
\end{bmatrix},  \quad
\by=
\begin{bmatrix}
\bu \\ \bv \\ p
\end{bmatrix}, 
\end{equation}
Many problems in structural dynamics can be represented in the abstract form~\eqref{eq:dyn_sys}.
Note that in case of the dynamic Biot model $\bM+\bL+\bD$ is a self-adjoint and invertible linear
operator with an inverse $(\bM+\bL+\bD)^{-1}$ defined on the dual space
$\bW^*:=\bUd^* \times \bU^* \times P^*$ of an appropriate product space $ \bW:=\bUd \times \bU \times P$.
However, the operators $\bM: \bW \to \bW^*$ and $\bL: \bW \to \bW^*$ and in the case $c_p=0$
also $\bD: \bW \to \bW^*$ are not invertible individually.
This is the reason why many popular standard implicit time integration schemes, for instance the
 Crank Nicolson method, cf~\cite{Hairer_etal2010geometric}, which requires the invertibility of $\bM$, can not be
applied straighforwardly. However, further refined/combined methods have already been considered in
the present context as early as in \cite{Zienkiewicz1984dynamic}. Before we will also address this issue
we will generalize the system~\eqref{eq:Biot} in order to present the dynamic MPET model as subject
of further discussions.

\subsection{The dynamic MPET model}

A basic assumption in the MPET model is that the elastic solid matrix is permeated by $n \ge 1$
fluid networks each of which being described by its individual fluid displacement $\bv_i$, relative
fluid displacement $\bw_i=\varphi_i(\bv_i-\bu)$ and fluid pressure $p_i$, where $\varphi_i$ denotes
the porosity of the solid induced by the $i$-th network. For consistency reasons we may assume that
$$
\sum_{i=1}^n \varphi_i = \varphi \in (0,1) \quad \mbox{and} \quad \varphi_i \in (0,1) \quad \mbox{for all }
i=1,2,\ldots,n.
$$

\noindent
The system of two momentum and one mass balance equations for $n$ fluid networks reads
\begin{subequations}\label{eq:MPETini}
\begin{align}
-\divv\sig + \rho\ddot{\bu}  +\sum_{i=1}^{n}\rho_i \ddot{\bw_i} + \sum_{i=1}^{n} \tilde{\alpha_i} \nabla p_i
&= \ \ \tilde{\bf}, \ \quad \text{in } \Omega\times(0,T), \label{eq:MPETini_a} \\
\rho_i \ddot{\bu}+\rho_{m_i} \ddot{\bw_i} + \bK_i^{-1} \dot{\bw_i} + \nabla p_i 
&= -\tilde{\bf}_i, \quad \text{in } \Omega\times(0,T), \quad \mbox{for all } i=1,\cdots,n,\label{eq:MPETini_b} \\
-\tilde{\alpha_i} \dot{\divv \bu} - \dot{\divv \bw_i} -c_{p_i} \dot{p_i} - \sum_{\substack{j=1\\j\neq i}}^{n} \tilde{\beta}_{ij}(p_i-p_j)
&= \ \ 0, \ \, \quad \text{in } \Omega\times(0,T), \quad \mbox{for all } i=1,\cdots,n,\label{eq:MPETini_c},
\end{align}
\end{subequations}
herewith generalizing~\eqref{eqM1}--\eqref{eqM3}, where $\rho:= \sum_{i=1}^{n} \varphi_i\rho_i + (1-\varphi)\rho_s$
again denotes the total density and $\tilde{\bf}$ the body force; The mass densities of the solid and the $i$-th fluid
component are denoted by $\rho_s$ and $\rho_i$, respectively, the Biot-Willis parameter of the $i$-th network by
$\tilde{\alpha_i} \in [\varphi_i ,1]$.
Further, $\tilde{\bf}_i$ is the body force associated with the $i$-th fluid compartment. Moreover, each fluid is characterized
by its effective density $\rho_{m_i} \geq \frac{\rho_i}{\varphi_i}$, cf~\cite{Biot1962}, and viscosity $\eta_i$ resulting in a hydraulic conductivity
$\bK_i := \bm{\kappa}_{i}/\eta_i$ of the $i$-th network, where $\bm{\kappa}_{i}$ denotes its permeability.

Note that the additional term
$
\sum_{j\neq i} \tilde{\beta}_{ij}(p_i-p_j)
$
in \eqref{eq:MPETini_c} models mass exchange between the networks due to pressure differences, cf~\cite{Chou2016afully}\cite{TullyVentikos2011cerebral}.

Applying a symmetrization procedure analogous to the one that has lead to~\eqref{eq:Biot} we obtain 
\begin{subequations}
\begin{align}
-\divv\sig + ((1-\varphi)\rho_s -\sum_{i=1}^{n}\varphi_i\rho_i+\sum_{i=1}^{n}\varphi_i^2\rho_{m_i}) \ddot{\bu}  + \sum_{i=1}^{n}\varphi_i^2\bK_i^{-1}\dot{\bu}  &+\sum_{i=1}^{n}\left((\rho_i  -\varphi_i\rho_{m_i})\ddot{\bv_i}  - \varphi_i\bK_i^{-1}\dot{\bv_i}\right)
\nonumber \\
+\sum_{i=1}^{n}\alpha_i\nabla p_i  &=\bf, \quad \label{eq:MPET_a} \\
(\rho_i  - \varphi_i\rho_{m_i}) \ddot{\bu} - \varphi_i\bK_i^{-1}\dot{\bu}+\rho_{m_i} \ddot{\bv_i} + \bK_i^{-1}\dot{\bv_i} +\nabla p_i 
&= \bg_i, \quad i=1,\cdots,n, \label{eq:MPET_b} \\
-\alpha_i\dot{\divv \bu}  -\dot{\divv \bv_i}  -c_{p_i}\dot{p_i} -\sum_{\substack{j=1\\i\neq j}}^{n} \tilde{\beta}_{ij}(p_i-p_j) 
&= 0, \quad \ i=1,\cdots,n,  \label{eq:MPET_c}
\end{align}\label{eq:MPET}
\end{subequations}
where $0 \leq \alpha_i := \tilde{\alpha_i}-\varphi_i$ and  $\bf:=\tilde{\bf} + \sum_{i=1}^n \varphi_i \tilde{\bf}_i$
and $\bg_i:=-\tilde{\bf}_i$.

Again, the system~\eqref{eq:MPET} can be represented in the form~\eqref{eq:dyn_sys} but now with operators
 $\bM$, $\bD$, $\bL$, right hand side $\bF$, and unknown vector $\by$ given by
\begin{subequations}\label{eq:op_MPET}
\begin{align}
\bM&=
\begin{bmatrix}
((1 \msp - \msp \varphi)\rho_s \msp - \msp \sum_{i=1}^{n} \varphi_i (\rho_i \msp - \msp \varphi_i \rho_{m_i})) I & (\rho_1  \msp - \msp \varphi_1\rho_{m_1}) I 
& \cdots & (\rho_n  \msp - \msp \varphi_n\rho_{m_n}) I & 0  & \cdots & 0 \ \\[1ex]
(\rho_1  - \varphi_1\rho_{m_1}) I & \rho_{m_1} I 
& \cdots & 0 & 0 & \cdots & 0 \ \\
\vdots & \vdots 
& \ddots & \vdots & \vdots & \ddots & \vdots \ \\
(\rho_n  - \varphi_n\rho_{m_1}) I & 0 
& \cdots & \rho_{m_n} I & 0 & \cdots & 0 \ \\[1ex]
0 & 0 
& \cdots & 0 & 0 & \cdots & 0 \ \\
\vdots & \vdots 
& \ddots & \vdots & \vdots & \ddots & \vdots \ \\
0 & 0 & \cdots 
& 0 & 0 & \cdots & 0 \
\end{bmatrix},  \label{eq:M_gen} \\[2ex]
\bD&=\left[\begin{array}{ccccccccccc}
\sum_{i=1}^{n}\varphi_i^2\bK_i^{-1} & -\varphi_1\bK_1^{-1} 
& \cdots & -\varphi_n\bK_n^{-1} & 0
&\cdots&0\\[1ex]
-\varphi_1\bK_1^{-1} & \bK_1^{-1} 
& \cdots & 0  & 0 
&\cdots &0\\
\vdots & \vdots & \ddots & \vdots & \vdots&\ddots &\vdots \\
-\varphi_n\bK_n^{-1} & 0 & \cdots & \bK_n^{-1}& 0&\cdots &0\\[1ex]
-\alpha_1\divv & -\divv & \cdots & 0 & - c_{p1} I &\cdots &0 \\
\vdots & \vdots & \ddots & \vdots  & \vdots & \ddots &\vdots \\
-\alpha_n\divv & 0 & \cdots & -\divv & 0&\cdots&-c_{pn} I \\
\end{array}\right], \label{eq:D_gen} \\[2ex]
\bL&=\left[\begin{array}{ccccccccccc}
- 2\mu\divv\eps -\lambda \nabla\divv & 0 & \cdots & 0 & \alpha_1\nabla & \cdots & \alpha_n\nabla\\[1ex]
0 & 0 & \cdots & 0& \nabla &\cdots &0\\
\vdots & \vdots & \ddots & \vdots & \vdots & \ddots &\vdots\\
0 & 0 & \cdots & 0& 0&\cdots &\nabla\\[1ex]
0 & 0 & \cdots & 0  & -\tilde{\beta}_{11} I &\cdots &\tilde{\beta}_{1n} I \\
\vdots & \vdots & \ddots & \vdots & \vdots & \ddots &\vdots \\
0 & 0 & \cdots & 0 & \tilde{\beta}_{n1} I & \cdots & -\tilde{\beta}_{nn} I \\
\end{array}\right], \label{eq:L_gen} 
\end{align}
\end{subequations}
%
%
\begin{equation}\label{eq:Fr} 
\bF=
\begin{bmatrix}
\bf \\
\bg_1 \\
\vdots \\
\bg_n \\
0 \\
\vdots \\
0
\end{bmatrix},  \qquad
\by=
\begin{bmatrix}
\bu \\
\bv_1 \\
\vdots \\
\bv_n \\
p_1 \\
\vdots \\
p_n
\end{bmatrix} .
\end{equation}
Note that, as before, $(\bM+\bL+\bD)$ is self-adjoint. In the next section, we will use (block) operators
composed of submatrices of $\bM$, $\bL$ and $\bD$. For this reason we define
\begin{equation}\label{op_partitioning}
\bM:=
\begin{bmatrix}
\bM_{11} & \bM_{12} & \bM_{13} \\
\bM_{21} & \bM_{22} & \bM_{23} \\
\bM_{31} & \bM_{32} & \bM_{33}
\end{bmatrix}, \quad
\bD:=
\begin{bmatrix}
\bD_{11} & \bD_{12} & \bD_{13} \\
\bD_{21} & \bD_{22} & \bD_{23} \\
\bD_{31} & \bD_{32} & \bD_{33}
\end{bmatrix}, \quad
\bL:=
\begin{bmatrix}
\bL_{11} & \bL_{12} & \bL_{13} \\
\bL_{21} & \bL_{22} & \bL_{23} \\
\bL_{31} & \bL_{32} & \bL_{33}
\end{bmatrix}
\end{equation}
where the three-by-three partitioning of $\bM$, $\bL$ and $\bD$ corresponds with the partitioning
of the unknown vector $\by$ into $\bu$, $\bv:=(\bv_1^T,\ldots,\bv_n^T)^T$ and $\bp=(p_1,\ldots,p_n)^T$.
The definition of $\bM_{i,j}$, $\bL_{i,j}$ and $\bD_{ij}$ for $1\le i,j \le 3$ then follows from equating
corresponding blocks in~\eqref{op_partitioning} and \eqref{eq:M_gen}--\eqref{eq:L_gen}.

\section{Discretization}\label{sec:discretization}

In this section we present first a second order time discretization method for the dynamic MPET problem and
then recall a family of mixed finite element methods for space discretization that provide mass conservation
in a strong, that is, pointwise, sense.

\subsection{Time discretization}

To start with, consider the equation~\eqref{eq:dyn_sys} with the operators $\bM$, $\bD$, $\bL$, right hand
side $\bF$, and unknown vector $\by$ defined by~\eqref{eq:op_MPET} and \eqref{eq:Fr}.  A second-order
accurate implicit time integration method that can be represented in the form~\eqref{eq:dyn_sys} is the Crank Nicolson method method
. We want to use it in the present context in which we have to resolve the
issue that the operator~$\bM$ is not invertible. 

Let us consider a time interval $[0,T]$, for simplicity partitioned into $n$ equidistant subintervals of
length $\tau$, i.e., $\tau=T/n$.
Then, starting from known initial values for $\bu$, $\bv:=(\bv_1^T,\ldots,\bv_n^T)^T$ and $\bp=(p_1,\ldots,p_n)^T$
at time $t=0$, which we will denote by $\bu^0$, $\bv^0$ and $\bp^0$ and collect in a vector $\by^0$ the time-stepping
scheme we wish to construct should produce a time-discrete approximation of the vector of unknowns $\by$ at time
$t_{k+1}=t_k+\tau$ denoted by $\by^{k+1}$ from the time-discrete approximation $\by^k$ at time $t_k$ by solving an
operator equation of the form
\begin{equation}\label{eq:time_step_problem}
\bA \by^{k+1} = \bG^{k+1}
\end{equation}
where the right hand side $\bG^{k+1}$ is defined in terms of computable quantities at time $t_k$ and $t_{k+1}$,
including, for instance, approximations of the time derivatives $\dot{\bu}$ and $\dot{\bv}$ at time $t_k$
if available.\footnote{Note that the exact values of $\dot{\bu}$ and $\dot{\bv}$ are given at time $t_0=0$.}

Assuming for a moment that we know $\bp$ we can consider a resticted dynamical problem
\begin{equation}\label{eq:dyn_sys_res}
\bar{\bM} \, \ddot{\br} + \bar{\bD}  \dot{\br} + \bar{\bL}  \br = \bar{\bF}
\end{equation}
for the unknowns $\bu$ and $\bv$ which we collect in the vector $\br$, i.e., $\br=(\bu^T,\bv^T)^T$, where
the operators $\bar{\bM}$, $\bar{\bD}$, $\bar{\bL}$ and right hand side $\bar{\bF}$ are defined by
\begin{equation}\label{eq:op_res}
\bar{\bM}:=
\begin{bmatrix}
\bM_{11} & \bM_{12} \\
\bM_{21} & \bM_{22} 
\end{bmatrix}, \quad
\bar{\bD}:=
\begin{bmatrix}
\bD_{11} & \bD_{12} \\
\bD_{21} & \bD_{22} 
\end{bmatrix}, \quad
\bar{\bL}:=
\begin{bmatrix}
\bL_{11} & \bL_{12}  \\
\bL_{21} & \bL_{22} 
\end{bmatrix}, \quad
\bar{\bF}:=
\begin{bmatrix}
\bf \\
\bg
\end{bmatrix} -
\begin{bmatrix}
\bL_{13} \\
\bL_{23}
\end{bmatrix}
\bp .
\end{equation}
 Introducing the new variable $\bs:=\dot{\br}$,
the second-order system~\eqref{eq:dyn_sys_res} can be rewritten in form of the following equivalent first-order system:
\begin{subequations}\label{first_order_sys}
\begin{align}
\dot{\br} &= \bs\label{first_order_sys_a}\\
\dot{\bs} &=  - \bar{\bM}^{-1} \bar{\bD} \bs - \bar{\bM}^{-1} \bar{\bL} \br + \bar{\bM}^{-1} \bar{\bF}
=: \bar{\bH} \label{first_order_sys_b} .
\end{align}
\end{subequations}
Applying Crank-Nicolson method to~\eqref{first_order_sys}
one computes the approximation $\br^{k+1}$ at time $t_{k+1}$ from the system
\begin{subequations}\label{Newmark_sys}
\begin{align}
\br^{k+1} &= \br^{k} + \frac{\tau}{2} (\bs^{k} + \bs^{k+1}) , \label{Newmark_sys_a}\\
\bs^{k+1} &=\bs^{k} + \frac{\tau}{2} (\bar{\bH}(\br^{k},\bs^{k},t_k)+\bar{\bH}(\br^{k+1},\bs^{k+1},t_{k+1}))
=:\bs^{k} + \frac{\tau}{2} (\bar{\bH}^{k}+\bar{\bH}^{k+1}) \label{Newmark_sys_b}.
\end{align}
\end{subequations}
Using the definition of
$\bar{\bH}^{k}$ and $\bar{\bH}^{k+1}$ according to~\eqref{first_order_sys_b} yields
\begin{equation}\label{eq:a_new}
(\bar{\bM} + \frac{\tau}{2}\bar{\bD})\bs^{k+1} =  (\bar{\bM} - \frac{\tau}{2}\bar{\bD}) \bs^{k}
 - \frac{\tau}{2}(\bar{\bL} \br^{k+1}+ \bar{\bL} \br^{k} )
+\frac{\tau}{2}(\bar{\bF}^k +\bar{\bF}^{k+1})
\end{equation}
Next, inserting~\eqref{eq:a_new} in~\eqref{Newmark_sys_a}.
Collecting terms, the time-step equation for the resticted dynamical problem~\eqref{eq:dyn_sys_res}
is given by
\begin{subequations}\label{eq:time_step_eq}
\begin{align}
(\bar{\bM}+\frac{\tau}{2} \bar{\bD}+\frac{\tau^2}{4}\bar{\bL}) \br^{k+1} 
&=\frac{\tau^2}{4}(\bar{\bF}^{k} +\bar{\bF}^{k+1})+ (\bar{\bM}   +\frac{\tau}{2}\bar{\bD} -\frac{\tau^2}{4}\bar{\bL} )\br^k
+\tau\bar{\bM}\bs^k \label{eq:time_step_eq_a}\\
\frac{\tau}{2}\bs^{k+1} - \br^{k+1} &= -\br^{k} - \frac{\tau}{2}\bs^{k} \label{eq:time_step_eq_b}
\end{align}
\end{subequations}
For symmetry reasons,multiplying equation~\eqref{eq:time_step_eq_b} with $(-1)$ and adding it to
equation~\eqref{eq:time_step_eq_a}, then multiplying equation~\eqref{eq:time_step_eq_b} with $\frac{\tau}{2}$, we obtain
\begin{subequations}\label{eq:time_step_eq_sym}
	\begin{align}
	(\bar{\bM}+\textbf{I}+\frac{\tau}{2} \bar{\bD}+\frac{\tau^2}{4}\bar{\bL}) \br^{k+1} -\frac{\tau}{2}\textbf{I}\bs^{k+1} 
	&=\frac{\tau^2}{4}(\bar{\bF}^{k} +\bar{\bF}^{k+1})+ (\bar{\bM}+\textbf{I}   +\frac{\tau}{2}\bar{\bD} -\frac{\tau^2}{4}\bar{\bL} )\br^k
	+\tau(\bar{\bM}+\frac{1}{2}\textbf{I})\bs^k \label{eq:time_step_eq_sym_a}\\
	\frac{\tau^2}{4}\bs^{k+1} - \frac{\tau}{2}\br^{k+1} &= -\frac{\tau}{2}\br^{k} - \frac{\tau^2}{4}\bs^{k} \label{eq:time_step_eq_sym_b}
	\end{align}
\end{subequations}

As a consequence of the presence of $\bp^{k}$ and $\bp^{k+1}$ in~\eqref{eq:time_step_eq_sym_a}, i.e., 
\begin{equation}\label{eq:time_step_coupling}
\bar{\bF}^{k} +\bar{\bF}^{k+1}=
\begin{bmatrix}
\bf^{k} \\
\bg^{k}
\end{bmatrix}
+
\begin{bmatrix}
\bf^{k+1} \\
\bg^{k+1}
\end{bmatrix}
- \begin{bmatrix}
\bL_{13} \\
\bL_{23}
\end{bmatrix}
\bp^{k} 
- \begin{bmatrix}
\bL_{13} \\
\bL_{23}
\end{bmatrix}
\bp^{k+1} ,
\end{equation}
it is not possible to apply~\eqref{eq:time_step_eq_sym} as a stand-alone scheme. This is why
we will couple~\eqref{eq:time_step_eq_sym} to a second time-step equation obtained from the
mass balance equation. 

We use the operators defined in the previous section to rewrite~\eqref{eq:MPET_c} in the form
\begin{equation}\label{mass_balance_MPET}
\bD_{31} \dot{\bu} + \bD_{32} \dot{\bv} + \bD_{33} \dot{\bp} + \bL_{33} \bp = {\bm 0} ,
\end{equation}
or, equivalently,
\begin{equation}\label{mass_balance_MPET_new}
\dot{\tilde{\bp}} = - \bL_{33} \bp.
\end{equation}
where we have introduced the new variable
$\tilde{\bp}:=\bD_{31} \bu + \bD_{32} \bv + \bD_{33} \bp$.
Application of the Crank-Nicolson scheme to~\eqref{mass_balance_MPET_new} results in
\begin{equation}\label{Crank_nicolson_mass_balance}
\tilde{\bp}^{k+1} = \tilde{\bp}^{k} + \frac{\tau}{2} (\dot{\tilde{\bp}}^{k} +\dot{\tilde{\bp}}^{k+1})
= \tilde{\bp}^{k} - \frac{\tau}{2} (\bL_{33} \bp^{k}+\bL_{33} \bp^{k+1}) ,
\end{equation}
which can also be expressed as
\begin{equation}
\bD_{31} \bu^{k+1} + \bD_{32} \bv^{k+1} + (\frac{\tau}{2} \bL_{33} + \bD_{33}) \bp^{k+1}
=\bD_{31} \bu^{k} + \bD_{32} \bv^{k} - (\frac{\tau}{2} \bL_{33} - \bD_{33}) \bp^{k} .\nonumber
\end{equation}
To ensure the Symmetry of all System, multiplying the above equation with $\frac{\tau^2}{4}$ yields
\begin{equation}\label{Crank_Nicolson_mass_balance_new}
\frac{\tau^2}{4}\bD_{31} \bu^{k+1} + \frac{\tau^2}{4}\bD_{32} \bv^{k+1} + \frac{\tau^2}{4}(\frac{\tau}{2} \bL_{33} + \bD_{33}) \bp^{k+1}
=\frac{\tau^2}{4}\bD_{31} \bu^{k} + \frac{\tau^2}{4}\bD_{32} \bv^{k} - \frac{\tau^2}{4}(\frac{\tau}{2} \bL_{33} - \bD_{33}) \bp^{k} .
\end{equation}
The combined scheme is now defined based on~\eqref{eq:time_step_eq}, which we
slightly rearrange in the form
\begin{subequations}\label{Crank_Nicolson_momentum_new}
	\begin{align}
	(\bar{\bM}+\textbf{I}+\frac{\tau}{2} \bar{\bD}+\frac{\tau^2}{4}\bar{\bL}) \br^{k+1} -\frac{\tau}{2}\textbf{I}\bs^{k+1}+&\frac{\tau^2}{4}\begin{bmatrix}
	\bL_{13} \\
	\bL_{23}
	\end{bmatrix}
	\bp^{k+1}
	=
	\frac{\tau^2}{4}\begin{bmatrix}
	\bf^{k} \\
	\bg^{k}
	\end{bmatrix}
	+
	\frac{\tau^2}{4}\begin{bmatrix}
	\bf^{k+1} \\
	\bg^{k+1}
	\end{bmatrix}
	-\frac{\tau^2}{4} \begin{bmatrix}
	\bL_{13} \\
	\bL_{23}
	\end{bmatrix}
	\bp^{k} \nonumber
    \\&+ (\bar{\bM}+\textbf{I}   +\frac{\tau}{2}\bar{\bD} -\frac{\tau^2}{4}\bar{\bL} )\br^k
	+\tau(\bar{\bM}+\frac{1}{2}\textbf{I})\bs^k \label{Crank_Nicolson_momentum_new_a}\\
	\frac{\tau^2}{4}\textbf{I}\bs^{k+1} - \frac{\tau}{2}\textbf{I}\br^{k+1} &= -\frac{\tau}{2}\textbf{I}\br^{k} - \frac{\tau^2}{4}\textbf{I}\bs^{k} \label{Crank_Nicolson_momentum_new_b}
	\end{align}
\end{subequations}
in order to collect all unknown quantities referring to time $t_{k+1}$ on the left hand side
and~\eqref{Crank_Nicolson_mass_balance_new}. It finally can be represented in the  modified
form~\eqref{eq:time_step_problem} where the operator $\bA$ and right hand side vector
$\bG^{k+1}$
are given by
\begin{equation}\label{time_stepping_scheme_op}
\bA:=
\begin{bmatrix}
\bar{\bA}_{11} & \bar{\bA}_{12} & -\frac{\tau}{2}\textbf{I} & \textbf{0} &\frac{\tau^2}{4}\bL_{13} \\
\bar{\bA}_{21} & \bar{\bA}_{22} &\textbf{0}  & -\frac{\tau}{2}\textbf{1} & \frac{\tau^2}{4}\bL_{23} \\
-\frac{\tau}{2}\textbf{I} & \textbf{0}     & \frac{\tau^2}{4}\textbf{I} &\textbf{0} &\frac{\tau^2}{4}\bL_{13} \\
\textbf{0}     & -\frac{\tau}{2}\textbf{I}  &\textbf{0}     & \frac{\tau^2}{4}\textbf{I} & \frac{\tau^2}{4}\bL_{23} \\
\frac{\tau^2}{4}\bD_{31} & \frac{\tau^2}{4}\bD_{32} & \textbf{0}&\textbf{0}&\frac{\tau^3}{8} \bL_{33} + \frac{\tau^2}{4}\bD_{33}
\end{bmatrix},\quad\by=
\begin{bmatrix}
\bu \\
\bv \\
\dot{\bu} \\
\dot{\bv} \\
\bp
\end{bmatrix}
\end{equation}
where
\begin{equation}\label{eq:def_Abar}
\begin{bmatrix}
\bar{\bA}_{11} & \bar{\bA}_{12}   \\
\bar{\bA}_{21} & \bar{\bA}_{22} 
\end{bmatrix} :=
\begin{bmatrix}
{\bM}_{11}+ \textbf{I}+\frac{\tau}{2} {\bD}_{11}+\frac{\tau^2}{4}{\bL}_{11}
&  {\bM}_{12}+\frac{\tau}{2}  {\bD}_{12}+\frac{\tau^2}{4}{\bL}_{12}  \\
 {\bM}_{21}+\frac{\tau}{2}  {\bD}_{21}+\frac{\tau^2}{4}{\bL}_{21}
&  {\bM}_{22}+ \textbf{I}+\frac{\tau}{2}  {\bD}_{22}+\frac{\tau^2}{4}{\bL}_{22} 
\end{bmatrix},
\end{equation}
$\dot{\bv}:=(\dot{\bv}_1^T,\ldots,\dot{\bv}_n^T)^T$
and
\begin{equation}\label{time_stepping_scheme_rhs}
\bG^{k+1}:=
\begin{bmatrix}
\frac{\tau^2}{4}\bf^{k} \msp+\msp \frac{\tau^2}{4}\bf^{k+1}  +  \bar{\bA}_{11} \bu^{k} + \bar{\bA}_{12} \bv^{k} - \frac{\tau^2}{2} (\bL_{11} \bu^{k} \msp+\msp \bL_{12} \bv^{k}) - \frac{\tau^2}{4}\bL_{13} \bp^{k}
+ \tau ((\bM_{11}+\frac{1}{2}\textbf{I}) \dot{\bu}^{k} \msp+\msp \bM_{12} \dot{\bv}^{k})  \\
\frac{\tau^2}{4}\bg^{k} \msp+\msp \frac{\tau^2}{4}\bg^{k+1}  +  \bar{\bA}_{21} \bu^{k} + \bar{\bA}_{22} \bv^{k} - \frac{\tau^2}{2} (\bL_{21} \bu^{k} \msp+\msp \bL_{22} \bv^{k}) - \frac{\tau^2}{4}\bL_{23} \bp^{k}
+ \tau ((\bM_{21}+\frac{1}{2}\textbf{I}) \dot{\bu}^{k} \msp+\msp \bM_{22} \dot{\bv}^{k})  \\
-\frac{\tau}{2}\textbf{I}\bu^k - \frac{\tau^2}{4}\textbf{I}\bud^k\\
-\frac{\tau}{2}\textbf{I}\bv^k - \frac{\tau^2}{4}\textbf{I}\bvd^k\\
\frac{\tau^2}{4}\bD_{31} \bu^{k} + \frac{\tau^2}{4}\bD_{32} \bv^{k} -\frac{\tau^2}{4} (\frac{\tau}{2} \bL_{33} - \bD_{33}) \bp^{k} 
\end{bmatrix} . \
\end{equation}
We see that the right hand is defined in terms of the quantities $\bu^{k}, \bv^{k},\dot{\bu}^{k}, \dot{\bv}^{k} $ and $\bp^{k}$ that are known
from the previous time step, and additionally $\bf^{k}, \bg^{k}$ and  $\bf^{k+1}, \bg^{k+1}$, which can be evaluated at any time moment
due to the known right hand side of~\eqref{eq:MPET}.\\
In summary, we have defined a time-stepping scheme that requires in each time step the solution of an equation
of the form~\eqref{eq:time_step_problem}

with a self-adjoint operator $\bA$ defined in~\eqref{time_stepping_scheme_op},
and we introduce the new abbreviations
\begin{subequations}\label{abreviations}
	\begin{align}
	\g_i &:= -\left((\rho_i  - \varphi_i\rho_{m_i}) - \frac{\tau}{2}\varphi_i K_i^{-1} \right) , \label{gamma_i} \\
	\g_u &:=\left((1-\varphi)\rho_s+1+\sum_{i=1}^{n}\varphi_i \g_i \right),\quad \g_{v,i} :=\frac{\rho_i + \g_i}{\varphi_i} +1\\
	\beta_{ij} &:= \frac{\tau^3}{8}\tilde{\beta_{ij}} , \quad 1 \le i,j \le n, \ i \neq j ,  \quad 
	\beta_{ii} :=\sum_{\substack{j=1\\i\neq j}}^{n}\frac{\tau^3}{8}\tilde{\beta_{ij}} + \frac{\tau^2}{4}c_{p_i} , \quad 1 \le i \le n. \label{beta_ij}
	\end{align}
\end{subequations}
and assumed that $\bK_i = K_i \bm I$.
Then a self-adjoint operator $\bA$ defined in~\eqref{time_stepping_scheme_op}, can be written as follows,
\begin{align*}
\mathcal{A}:=\left[ \begin{array}{ccccccccccccccc}
-\frac{\tau^2}{4}\divv\sig + \g_u & -\g_1 &  \cdots & -\g_n & &-\frac{\tau}{2}& &0&\cdots&0& \frac{\tau^2}{4}\alpha_1\nabla & \cdots  & \frac{\tau^2}{4}\alpha_n\nabla\\\\
-\g_1 & \g_{v,1}& \cdots &0 & &0& &-\frac{\tau}{2}&\cdots&0&\frac{\tau^2}{4}\nabla  & \cdots & 0\\
\vdots&  &\ddots &\vdots & & \vdots& & \vdots&\ddots& \vdots& \vdots & \ddots & \vdots\\
-\g_n & 0& \cdots & \g_{v,n} & &0& &0&\cdots&-\frac{\tau}{2}& 0 & \cdots  & \frac{\tau^2}{4}\nabla\\\\
-\frac{\tau}{2} & 0    &\cdots&0     & &\frac{\tau^2}{4}      & &0     &\cdots&0     & 0    & \cdots & 0\\\\
0     &-\frac{\tau}{2}&\cdots&0          & &0      & &\frac{\tau^2}{4}     &\cdots&0     & 0    & \cdots & 0\\
\vdots&\vdots     &\ddots&\vdots     & &\vdots & &\vdots&\ddots&\vdots&\vdots& \ddots & \vdots\\
0     & 0         &\cdots&-\frac{\tau}{2}& &0      & &0     &\cdots&\frac{\tau^2}{4}     &0     & \cdots & 0\\\\
-\frac{\tau^2}{4}\alpha_1\divv & -\frac{\tau^2}{4}\divv& \cdots &0      & &0     & &0&\cdots&0& -\beta_{11}  & \cdots & \beta_{1n}\\
\vdots        & \vdots&\ddots  &\vdots & &\vdots& &\vdots & \ddots & \vdots& \vdots & \ddots & \vdots\\
-\frac{\tau^2}{4}\alpha_n\divv & 0     & \cdots &-\frac{\tau^2}{4}\divv & &0     & &0&\cdots&0& \beta_{n1}  & \cdots & -\beta_{nn}\\
\end{array}\right] 
\end{align*}
\section{Mass conserving space discretization}
\subsection{Space discretization of continuous Problem }
The weak formulation of system \eqref{Crank_Nicolson_momentum_new} and \eqref{Crank_Nicolson_mass_balance_new} :\newline
Find $(\bu;\bv;\bud;\bvd;\bp) \in \bUd\times\bU\times \bVd\times\bV\times \bP$, 
such that for any $(\bw;\bz;\bwd;\bzd;\bq)  \in \bUd\times\bU\times \bVd\times\bV\times \bP$ then  

\begin{subequations}
\begin{align}
\frac{\mu\tau^2}{2}(\eps(\bu),\eps(\bw)) +\frac{\lambda\tau^2}{4}(\divv \bu,\divv \bw)+ \g_u(\bu,\bw)  -\frac{\tau}{2}(\bud,\bw) +(\bar{\bA}_{12}\bv,\bw)-\frac{\tau^2}{4}(\ba\bp,\Divu \bw)&= (\bG_1,\bw), \\
(\bar{\bA}_{21}\bu,\bz)         + (\bar{\bA}_{22}\bv,\bz) - \frac{\tau}{2}(\bvd,\bz) -\frac{\tau^2}{4}( \bp,\Divv \bz)  &= (\bG_2,\bz),\\
-\frac{\tau}{2}(\bu,\bwd) + \frac{\tau^2}{4}(\bud,\bwd) &= (\bG_3,\bwd),\\
-\frac{\tau}{2}(\bv,\bzd)+\frac{\tau^2}{4}(\bvd,\bzd) &=(\bG_4,\bzd),\\
-\frac{\tau^2}{4}(\ba\Divu \bu,\bq)  - \frac{\tau^2}{4}(\Divv \bv,\bq)  + ((\frac{\tau^3}{8} \bL_{33} + \frac{\tau^2}{4}\bD_{33})\bp,\bq)  &= (\bG_5,\bq),
\end{align}\label{eq:MPET weak}
\end{subequations}
where $$\Divv \bv = \left(\begin{array}{c}
\divv \bv_1\\
\vdots\\
\divv \bv_n
\end{array}\right)\text{~~for all } \bv \in \bU,\quad \Divu \bu = \left(\begin{array}{c}
\divv \bu\\
\vdots\\
\divv \bu
\end{array}\right)\text{~~for all } \bu \in \bUd,\ba :=
\begin{bmatrix}
\alpha_1  &0 &  \cdots &0 \\
0 & \alpha_2& \cdots & 0 \\
\vdots &\vdots & \ddots & \vdots \\
0 &0 & \cdots&  \alpha_n \\
\end{bmatrix}$$
Consider the Hilbert spaces $\bUd = H_0^1(\Omega)^d,~\bVd=H_0(\divv, \Omega),~ \bP = (L_0^2(\Omega))^n$ and $\bU,\bV = (H_0(\divv, \Omega))^n$ with parameter-dependent
norms $\|\cdot\|_{\bUd\times\bU\times\bVd\times\bV}, \|\cdot\|_{\bP}$ induced by the inner product

\begin{subequations}
\begin{align}
((\bu,\bv,\bud,\bvd),(\bw,\bz,\bwd,\bzd))_{\bUd\times\bU\times\bVd\times\bV} &= \frac{\mu\tau^2}{2}(\eps(\bu),\eps(\bw)) +\frac{\lambda\tau^2}{4}(\divv \bu,\divv \bw) + (\Lambda_{uv} \left(\begin{array}{c}
\bu \\\bv \\ \bud\\ \bvd
\end{array}\right),\left(\begin{array}{c}
\bw \\\bz\\ \bwd\\ \bzd
\end{array}\right))\nonumber\\& + \frac{\tau^2}{4}(\Lambda^{-1}\left(\Divv \bv + \ba \Divu \bu\right),\Divv \bz+ \ba \Divu \bw),\\
(\bp,\bq)_{\bP}&=\frac{\tau^2}{4}(\Lambda \bp,\bq)
\end{align}\label{norm}
\end{subequations}
where \begin{align*}
\Lambda &:= \Lambda_1 + \Lambda_2 + \Lambda_3,\quad \Lambda_1:=-(\frac{\tau}{2} \bL_{33} + \bD_{33}),\quad\Lambda_2 := \frac{\tau^2}{4}\bar{\bA}_{22}^{-1}
,\qquad\Lambda_3 := \frac{\tau^2}{4\gamma}\ba\Lambda_4\ba,\quad \gamma := \max{\lbrace \frac{\tau^2\mu}{2},\frac{\tau^2\lambda}{4},\g_u}\rbrace
\end{align*}
\begin{align*}
\Lambda_4 :=
\begin{bmatrix}
1 &1 & \cdots& \cdots & 1 \\
1 & 1& \cdots & \cdots& 1 \\
\vdots &\vdots & \ddots& \cdots & \vdots \\
1 &1 & \cdots& \cdots & 1 \\
\end{bmatrix},\qquad
&\Lambda_{uv}:=\begin{bmatrix}
\g_u            & \bar{\bA}_{12} & -\frac{\tau}{2}\textbf{I} & \textbf{0} \\
\bar{\bA}_{21} & \bar{\bA}_{22} & \textbf{0} & -\frac{\tau}{2}\textbf{I}\\
 -\frac{\tau}{2}\textbf{I} & \textbf{0} & \frac{\tau^2}{4}\textbf{I}& \textbf{0}\\
  \textbf{0} & -\frac{\tau}{2}\textbf{I}& \textbf{0}& \frac{\tau^2}{4}\textbf{I}
\end{bmatrix}
\end{align*}
The matrices $\Lambda_1$ and $\Lambda_{uv}$ are positive semi-definit, because they are sum of positive matrices.\\
From System of equations \eqref{eq:MPET weak} introduce the bilinear form
\begin{align}	\mathcal{A}((\bu;\bv;\bud;\bvd;\bp),(\bw,\bz;\bwd;\bzd;\bq))&=
\frac{\mu\tau^2}{2}(\eps(\bu),\eps(\bw)) +\frac{\lambda\tau^2}{4}(\divv \bu,\divv \bw)+ (\Lambda_{uv} \left(\begin{array}{c}
\bu \\\bv \\ \bud\\ \bvd
\end{array}\right),\left(\begin{array}{c}
\bw \\\bz\\ \bwd\\ \bzd
\end{array}\right)) \nonumber
\\&-\frac{\tau^2}{4}(\bp,\ba\Divu \bw+\Divv \bz) -\frac{\tau^2}{4}(\ba\Divu \bu+\Divv \bv,\bq)  - \frac{\tau^2}{4}(\Lambda_1 \bp,\bq)\label{Bilinear-form}
\end{align}
\subsection{the space discretization of discrete Problem }
\subsubsection{Preliminaries and notation}
Let $\mathcal{T}_h$ be a shape-regular triangulation of mesh-size $h$ of
the domain $\Omega$ into triangles $\{T\}$ and define the set of all interior edges (or faces) of  $\mathcal{T}_h$ by $\mathcal{E}_h^{I}$ and the set of all boundary edges (or faces) by $\mathcal{E}_h^{B}$. Let
$\mathcal{E}_h=\mathcal{E}_h^{I}\cup \mathcal{E}_h^{B}$.

For $s\geq 1$, we introduce the spaces

$$
H^s(\mathcal{T}_h)=\{\phi\in L^2(\Omega), \mbox{ such that } \phi|_T\in H^s(T) \mbox{ for all } T\in \mathcal{T}_h \}.
$$

We further define some trace operators.Denote by $e = \partial T_1 \cap \partial T_2$ the common boundary (interface) of two subdomains $T_1$ and $T_2$ in $\mathcal{T}_h$, and by $\bn_1$ and $\bn_2$, the unit normal vectors to $e$ that point to the exterior of $T_1$ and $T_2$, correspondingly.

For any  $e
\in \mathcal{E}_h^{I}$
and $q\in H^1(\mathcal{T}_h)$,$\bv \in H^1(\mathcal{T}_h)^d$ and $\bm \tau \in H^1(\mathcal{T}_h)^{d\times d}$, the averages are defined as
\begin{equation*}
\begin{split}
\{\bv\} &=\frac{1}{2}(\bv|_{\partial T_1\cap e}\cdot \bn_1-\bv|_{\partial T_2\cap e}\cdot \bn_2), \quad 
\{\bm \tau\}=\frac{1}{2}(\bm \tau|_{\partial T_1\cap
	e} \bn_1-\bm \tau|_{\partial T_2\cap e} \bn_2),
\end{split}
\end{equation*}
and the jumps are given by
\begin{equation*}
[q]=q|_{\partial T_1\cap e}-q|_{\partial T_2\cap e},\quad
[\bv]=\bv|_{\partial T_1\cap e}-\bv|_{\partial T_2\cap e}.
\end{equation*}

When $e \in  \mathcal{E}_h^{B}$, then the above quantities are defined as
$$
\{\bv\}=\bv |_{e}\cdot \bn,\quad
\{\bm \tau\}=\bm \tau|_{e}\bn, \quad
[q]=q|_{e}, ~~ [\bv]=\bv|_{e}.
$$
If $\bn_T$ is the outward unit normal to $\partial T$, it is easy to show that
, for $\bm \tau \in H^1(\Omega)^{d\times d}$ and for all $\bv\in H^1(\mathcal{T}_h)^d$, we have
\begin{equation}\label{eq:69}
\sum_{T\in \mathcal{T}_h}\int_{\partial T}(\bm \tau\bn_T)\cdot  \bv ds=\sum_{e\in \mathcal{E}_h}\int_{e}\{\bm \tau\}\cdot [\bv] ds.
\end{equation}

\subsubsection{DG discretization}\label{DG discretization}
The finite element spaces we consider are denoted by
\begin{align*}
\bUd_h&=\{\bu \in H(\divv ;\Omega):\bu|_T \in \bUd(T),~T \in \mathcal{T}_h;~ \bu \cdot
\bn=0~\hbox{on}~\partial \Omega\},
\\[1ex]
\bU_{i,h}&=\{\bv \in H(\divv ;\Omega):\bv|_T \in \bU_i(T),~T \in \mathcal{T}_h;~ \bv \cdot
\bn=0~\hbox{on}~\partial \Omega\},~~i=1,\cdots,n,
\\
\bVd_h&=\{\tilde{\bu} \in H(\divv ;\Omega):\tilde{\bu}|_T \in \bVd(T),~T \in \mathcal{T}_h;~ \tilde{\bu} \cdot
\bn=0~\hbox{on}~\partial \Omega\},
\\
\bV_{i,h}&=\{\tilde{\bv} \in H(\divv ;\Omega):\tilde{\bv}|_T \in \bV_i(T),~T \in \mathcal{T}_h;~ \tilde{\bv} \cdot
\bn=0~\hbox{on}~\partial \Omega\},~~i=1,\cdots,n,
\\
P_{i,h}&=\{p \in L^2(\Omega):p|_T \in Q_i(T),~T \in \mathcal{T}_h; ~\int_{\Omega}p dx=0\},~~i=1,\cdots,n.
\end{align*}
The discretization we analyze in the present context define the local spaces $\bUd(T)/\bU_i(T)/\bVd(T)/\bV_i(T)/Q_i(T)$ via $BDM_l(T)/$ $BDM_l(T)/ RT_{l-1}(T)/RT_{l-1}(T)/P_{l-1}(T)$, or
$BDFM_l(T)/BDFM_l(T)/RT_{l-1}(T)$ $/RT_{l-1}(T)/P_{l-1}(T)$ for $l\geq 1$. Note that, for each of these choices, the important condition $\divv  \bUd(T)=\divv  \bU_i(T)=\divv  \bVd(T)=\divv  \bV_i(T)=Q_i(T)$ is satisfied, cf~\cite{cockburn2007note}\cite{HongKraus2017parameter}\cite{Hong2018conservativeMPET}.

Note that the normal component of any $\bu\in\bUd_h$ is continuous on the internal edges and vanishes on the boundary edges. Then, for all $e \in \mathcal{E}_h$nd for all$ \bm \tau \in H_1(\mathcal{T})^d, \bu \in \bUd_h$ it holds
\begin{equation}\label{eq:72}
\int_e[\bm u_n]\cdot \bm \tau ds=0,\quad\mbox{implying that}\quad \int_e[\bm u]\cdot \bm \tau ds=\int_e[\bm u_t]\cdot \bm \tau ds,
\end{equation}
where $\bu_n$ and $\bu_t$ denote the normal and tangential component of $\bu$, respectively.

Similar to the continuous problem, we denote
\begin{align*}
\bv_h^T&=(\bv_{1,h}^T, \cdots \bv_{n,h}^T),\qquad \bvd_h^T=(\bvd_{1,h}^T, \cdots \bvd_{n,h}^T),\qquad \bp_h^T=(p_{1,h},\cdots, p_{n,h}),\qquad
\bz_h^T=(\bz_{1,h}^T, \cdots \bz_{n,h}^T),
\\
\bzd_h^T&=(\bzd_{1,h}^T, \cdots \bzd_{n,h}^T),\qquad
\bq_h^T=(q_{1,h},\cdots, q_{n,h}),\qquad \bV_h=\bV_{1,h}\times\cdots\times\bV_{n,h}, \qquad\bP_h= P_{1,h}\times\cdots\times P_{n,h}.
\end{align*}
With this notation at hand, the discretization of the variational problem~\eqref{eq:MPET weak} is given as follows:

Find $(\bu_h; \bv_h;\bud_h;\bvd_h; \bp_h,)~\in~\bUd_h\times \bU_h\times\bVd_h\times \bV_h\times \bP_h$ such that, for any $(\bw_h; \bz_h;\bwd_h; \bzd_h; \bq_h,) \in \bUd_h\times \bU_h\times\bVd_h\times \bV_h\times \bP_h$ 
\begin{subequations}
\begin{align}
\frac{\mu\tau^2}{2}a_h(\bu_h,\bw_h) +\frac{\lambda\tau^2}{4}(\divv \bu_h,\divv \bw_h)+ \g_u(\bu_h,\bw_h)  -\frac{\tau}{2}(\bud_h,\bw_h) +(\bar{\bA}_{12}\bv_h,\bw_h)	
-\frac{\tau^2}{4}(\ba\bp_h,\Divu \bw_h)&= (\bG_1,\bw_h), \\
(\bar{\bA}_{21}\bu_h,\bz_h)         + (\bar{\bA}_{22}\bv_h,\bz_h) - \frac{\tau}{2}(\bvd_h,\bz_h) -\frac{\tau^2}{4}( \bp_h,\Divv \bz_h)  &= (\bG_2,\bz_h),\\
-\frac{\tau}{2}(\bu_h,\bwd_h) + \frac{\tau^2}{4}(\bud_h,\bwd_h) &= (\bG_3,\bwd_h),\\
-\frac{\tau}{2}(\bv_h,\bzd_h)+\frac{\tau^2}{4}(\bvd_h,\bzd_h) &=(\bG_4,\bzd_h),\\
-\frac{\tau^2}{4}(\ba\Divu \bu_h,\bq_h)  - \frac{\tau^2}{4}(\Divv \bv_h,\bq_h)  - \frac{\tau^2}{4}(\Lambda_1\bp_h,\bq_h)  &= (\bG_5,\bq_h),
\end{align}\label{a2dis}
\end{subequations}
where
\begin{eqnarray}
a_h(\bm u,\bm w)&=&\label{78}
\sum _{T \in \mathcal{T}_h} \int_T \eps(\bm{u}) :
\eps(\bm{w}) dx-\sum_{e \in \mathcal{E}_h} \int_e \{\eps(\bm{u})\} \cdot [\bm w_t] ds-\sum _{e \in \mathcal{E}_h} \int_e \{\eps(\bm{w})\} \cdot [\bm u_t]ds+\sum _{e
	\in \mathcal{E}_h} \int_e \eta h_e^{-1}[ \bm u_t] \cdot [\bm w_t] ds,
\end{eqnarray}
$\eta $ is a stabilization parameter independent of all parameters , the network scale $n$ and the mesh size $h$.\\
For any $\bu \in \bUd_h$, we introduce the mesh-dependent norms:
\begin{align*}
\|\bu\|_h^2=\sum _{T \in \mathcal{T}_h} 
\|\eps(\bu)\|_{0,T}^2+\sum _{e \in \mathcal{E}_h} h_e^{-1}\|[ \bu_t]\|_{0,e}^2, \qquad
\|\bu\|_{1,h}^2=\sum _{T \in \mathcal{T}_h} \|\nabla\bu\|_{0,T}^2+\sum _{e \in \mathcal{E}_h} h_e^{-1}\|[ \bu_t]\|_{0,e}^2 .
\end{align*}
the "DG"-norm
\begin{equation}\label{DGnorm}
\|\bu\|^2_{DG}=\sum _{T \in \mathcal{T}_h} \|\nabla\bu\|_{0,T}^2+\sum _{e \in \mathcal{E}_h} h_e^{-1}\|[ \bu_t]\|_{0,e}^2+\sum _{T \in \mathcal{T}_h}h_T^2|\bu|^2_{2,T},
\end{equation}
and, finally, the mesh-dependent norm $\|\bm (\cdot;\cdot;\cdot;\cdot) \|_{\bUd_h\times\bU\times\bVd\times\bV}$ by
\begin{equation}\label{U_hnorm}
\|(\bu;\bv;\bud;\bvd)\|^2_{\bUd_h\times\bU\times\bVd\times\bV}= \frac{\mu\tau^2}{2}\|\bu\|^2_{DG} +\frac{\tau^2\lambda}{4}\|\divv \bu\|^2 + \|\Lambda_{uv}^{\frac{1}{2}} \left(\begin{array}{c}
\bu \\\bv \\\bud \\\bvd
\end{array}\right)\|^2 + \frac{\tau^2}{4}\|\Lambda^{-\frac{1}{2}}\left(\Divv \bv + \ba \Divu \bu\right)\|^2.
\end{equation}
We now summarize several results on well-posedness and approximation
properties of the DG formulation:

\begin{itemize}
\item From the discrete version of Korn's inequality we have that the norms $\|\cdot\|_{DG}$,
$\|\cdot\|_h$, and $\|\cdot\|_{1,h}$ are equivalent on $\bUd_h$, namely,
\begin{subequations}\label{Korninequality}
\begin{align}
\|\bu\|_{DG}\eqsim  \|\bu\|_h\eqsim\|\bu\|_{1,h},\quad\mbox{for all}\quad~\bu \in \bUd_h .\\
\|\bu\|_{DG}^2\leq c_0\|\bu\|_{1,h}^2,\qquad	\|\bu\|_h^2\geq c_1\|\bu\|_{DG}^2
\end{align}
\end{subequations}
	
\item The bilinear form $a_h(\cdot,\cdot)$,
introduced in~\eqref{78} is continuous and we have
\begin{eqnarray}\label{continuity:a_h}
|a_h(\bm u,\bm w)|&\leq& c_2\| \bm u  \|_{DG}  \| \bm w  \|_{DG},\quad\mbox{for all}\quad \bm u,~\bm w\in H^2(\mathcal{T}_h)^d.
\end{eqnarray}
\item  The discrete Poincare inequality, cf~\cite{arnold1982interior}
\begin{align}
\|\bu\|^2 \leq c_3 \|\bu\|_{1,h}^2,\quad\mbox{for all}\quad~\bu \in \bUd_h.\label{Poincare}
\end{align}
\item For our choice of the finite element spaces $\bUd_h, \bU_h$ and {$\bm P_h$} we
have the following inf-sup conditions,
\begin{equation}
\inf_{{\bq_h\in \bP_h}}\sup_{\bu_h\in \bUd_h}\frac{(\divv\bu_h,\sum\limits_{i=1}^n q_{i,h})}{\|\bu_h\|_{1,h}\|\sum\limits_{i=1}^n q_{i,h}\|}\geq \beta_{s,h},\,\,\,\inf_{q_{i,h}\in P_{i,h}}\sup_{\bv_{i,h}\in \bU_{i,h}}\frac{(\divv\bv_{i,h},q_{i,h})}{\|\bv_{i,h}\|_{\divv}\|q_{i,h}\|}\geq \beta_{v,h},\label{divdis}
\end{equation}
where $\beta_{s,h}$ and $\beta_{v,h}$ are positive constant independent of all parameters, the network scale $n$ and the mesh size $h$.
\item The coercivity of $a_h(\cdot,\cdot)$ 
\begin{equation}\label{coercivity:a_h}
a_h(\bm{u}_h,\bm{u}_h)\geq \alpha_a \|\bm{u}_h\|^2_h,\quad\mbox{for all}\quad~\bm{u}_h\in\bUd_h,
\end{equation}
where $\alpha_a$ is a positive constant independent of all parameters, the network scale $n$ and the mesh size $h$.
\end{itemize}

Related to the discrete problem~\eqref{a2dis} and from the definition of the matrix $\Lambda_{uv}$, we define the bilinear
form
\begin{align}
\mathcal A_h((\bu_h; \bv_h;\bud_h; \bvd_h; \bp_h),(\bw_h; \bz_h;\bwd_h; \bzd_h; \bq_h))&=\frac{\mu\tau^2}{2}a_h(\bu_h,\bw_h) +\frac{\tau^2\lambda}{4}(\divv \bu_h,\divv \bw_h)+ (\Lambda_{uv} \left(\begin{array}{c}
\bu_h \\\bv_h\\ \bud_h \\ \bvd_h
\end{array}\right),\left(\begin{array}{c}
\bw_h \\\bz_h \\ \bwd_h \\ \bzd_h
\end{array}\right))
\nonumber\\ &-\frac{\tau^2}{4}(\bp_h,\ba\Divu \bw_h +\Divv \bz_h)-\frac{\tau^2}{4}(\ba\Divu \bu_h+\Divv \bv_h,\bq_h)  - \frac{\tau^2}{4}(\Lambda_1 \bp_h,\bq_h)\label{dis-Bilinear-form}
\end{align}

\section{Stability analysis}\label{sec:stability}

\subsection{Stability of the time-discrete problem}
The main result of this section is a proof of the uniform well-posedness, of problem \eqref{eq:MPET weak} under the norms
induced by~\eqref{norm}. 
Before we study the full dynamic MPET equations, we recall the following well known results, cf~\cite{Boffi2013mixed}\cite{Brezzi1974existence}, and two help Lemmas:
\begin{theorem3}
	There exists a constant $\beta_v > 0$ such that
	\begin{align}
	\inf_{q\in P_i} \sup_{\bv\in \bV_i} \frac{(\divv \bv, q)}{\|\bv\|_{\divv}\|q\|} \geq \beta_v,~i=1,\dots,n.
	\end{align}\label{div}
\end{theorem3}
\begin{theorem3}
	There exists a constant $\beta_s > 0$ such that
	\begin{align}
	\inf_{q\in P} \sup_{\bu\in \bUd} \frac{(\divv \bu, q)}{\|\bu\|_1\|q\|} \geq \beta_s
	\end{align}\label{grad}
\end{theorem3}
\begin{theorem3}
	the determinant of the following matrix
	$$A:=\begin{bmatrix}
	-b_1&-b_2&\cdots&\cdots&-b_n\\
	a&0&\cdots&0&0\\
	\vdots&\ddots &&\vdots&\vdots\\
	\vdots&  &\ddots&\vdots&\vdots\\
	0& \cdots& \cdots&a&0\\
	\end{bmatrix}_{n\times n}  \text{ is }
	\det(A)= (-1)^n\cdot a^{n-1}\cdot b_n$$
\end{theorem3}
\begin{proof}
By using induction method. For $n=1$ we have $\det(A)= -b_1$. 
Assume the induction hypothesis is true for $(n-1)$ and we proof for $n$.
By using the Laplace's formula for the last row it follow
{
$$\det(A) = (-1)^{n+n-1}\cdot a\cdot
\begin{vmatrix}
-b_1&-b_2&\cdots&\cdots&-b_n\\
a&0&\cdots&0&0\\
\vdots&\ddots &&\vdots&\vdots\\
\vdots&  &\ddots&\vdots&\vdots\\
0& \cdots& \cdots&a&0\\
\end{vmatrix}_{(n-1)\times (n-1)} = - a\left((-1)^{n-1}a^{n-2}b_n\right)=(-1)^n\cdot a^{n-1}\cdot b_n$$
	}
\end{proof}
\begin{theorem3}
the determinant of the following matrix
$$B:=\begin{bmatrix}
c&-b_1&-b_2&\cdots&b_n\\
-b_1&a&\cdots&\cdots&0\\
-b_2&\vdots&\ddots &&\vdots\\
\vdots&\vdots&  &\ddots&\vdots\\
-b_n&	0& \cdots& \cdots&a\\
\end{bmatrix}_{(n+1)\times (n+1)} \text{is } \det(B)=a^{n-1}\left(a\cdot c-\sum_{i=1}^{n}b_i^2\right)$$\label{det}
\end{theorem3}
\begin{proof}
By using induction method. For $n=1$ we have $\det(A)= a\cdot c -b_1^2$. 
Assume the induction hypothesis is true $n$ and we proof for $(n+1)$.
By using the Laplace's formula for the last row it follow
{
\begin{align*}
\det(B) &= (-1)^{n+1+1}\cdot (-b_n)\cdot
\begin{vmatrix}
-b_1&-b_2&\cdots&\cdots&-b_n\\
a&0&\cdots&0&0\\
\vdots&\ddots &&\vdots&\vdots\\
\vdots&  &\ddots&\vdots&\vdots\\
0& \cdots& \cdots&a&0\\
\end{vmatrix}_{n\times n} + (-1)^{2n+2}\cdot a\cdot\begin{vmatrix}
c&-b_1&-b_2&\cdots&b_{n-1}\\
-b_1&a&\cdots&\cdots&0\\
-b_2&\vdots&\ddots &&\vdots\\
\vdots&\vdots&  &\ddots&\vdots\\
-b_{n-1}&	0& \cdots& \cdots&a\\
\end{vmatrix}_{n\times n} 
\\&= (-1)^{n+2}\cdot (-b_n)\cdot(-1)^n\cdot a^{n-1}\cdot b_n + a\cdot a^{n-2}\left(a\cdot c-\sum_{i=1}^{n-1}b_i^2\right)=-a^{n-1}\cdot b_n^2 + a^{n-1}\left(a\cdot c-\sum_{i=1}^{n-1}b_i^2\right)
\end{align*}
	}
\end{proof}
The following theorem shows the boundedness of $\mathcal{A}((.;.;.;.;.),(.;.;.;.;.))$ in the norm induced by~\eqref{norm}:
\begin{theorem}
There exists a constant $C_b$ independent of  all parameters and the network scale~$n$, such that  for any

 $(\bu;\bv;\bud;\bvd;\bp)\in \bUd\times\bU\times\bVd\times\bV\times \bP, (\bw,\bz;\bwd;\bzd;\bq)\in \bUd\times\bU\times\bVd\times\bV\times \bP$
\begin{equation}
|\mathcal{A}((\bu;\bv;\bud;\bvd;\bp),(\bw;\bz;\bwd;\bzd;\bq))|\leq C_b (\|(\bu,\bv,\bud,\bvd)\|_{\bUd\times\bU\times\bVd\times\bV}+\|\bp\|_{\bP})\cdot(\|(\bw,\bz,\bwd,\bzd)\|_{\bUd\times\bU\times\bVd\times\bV}+\|\bq\|_{\bP}).\label{bound}
\end{equation}\label{theorem-bound} 
\end{theorem}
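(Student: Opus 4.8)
The plan is to bound each of the terms in the bilinear form \eqref{Bilinear-form} separately by products of (pieces of) the norms in \eqref{norm}, using Cauchy--Schwarz and the defining structure of the norm. The bilinear form has three groups of terms: (i) the elasticity part $\tfrac{\mu\tau^2}{2}(\eps(\bu),\eps(\bw))+\tfrac{\lambda\tau^2}{4}(\divv\bu,\divv\bw)$; (ii) the block $(\Lambda_{uv}(\bu,\bv,\bud,\bvd)^T,(\bw,\bz,\bwd,\bzd)^T)$; and (iii) the coupling/pressure terms $-\tfrac{\tau^2}{4}(\bp,\ba\Divu\bw+\Divv\bz)-\tfrac{\tau^2}{4}(\ba\Divu\bu+\Divv\bv,\bq)-\tfrac{\tau^2}{4}(\Lambda_1\bp,\bq)$. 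For (i) and (ii) the estimate is immediate: since $\Lambda_{uv}$ is symmetric positive semidefinite (as noted just before the statement), $(\Lambda_{uv}\cdot,\cdot)$ is a semi-inner product and Cauchy--Schwarz gives $|(\Lambda_{uv}x,y)|\le \|\Lambda_{uv}^{1/2}x\|\,\|\Lambda_{uv}^{1/2}y\|$, which is controlled by the second summand of the norm \eqref{norm}; likewise the elasticity terms are exactly the first two summands of the norm, so Cauchy--Schwarz in $L^2$ handles them with constant $1$.

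The only delicate terms are the mixed divergence couplings in (iii). Here I would use the algebraic identity that $\Lambda=\Lambda_1+\Lambda_2+\Lambda_3$ with all three summands symmetric positive semidefinite, so in particular $\Lambda\succeq\Lambda_1\succeq 0$ and $\Lambda\succeq\Lambda_3=\tfrac{\tau^2}{4\gamma}\ba\Lambda_4\ba$. For the term $-\tfrac{\tau^2}{4}(\ba\Divu\bu+\Divv\bv,\bq)$ I would insert $\Lambda^{1/2}\Lambda^{-1/2}$ and write it as $-\tfrac{\tau^2}{4}(\Lambda^{-1/2}(\Divv\bv+\ba\Divu\bu),\Lambda^{1/2}\bq)$, then Cauchy--Schwarz bounds it by $[\tfrac{\tau^2}{4}\|\Lambda^{-1/2}(\Divv\bv+\ba\Divu\bu)\|^2]^{1/2}\cdot[\tfrac{\tau^2}{4}\|\Lambda^{1/2}\bq\|^2]^{1/2}$, i.e. exactly (fourth summand of the $\bU$-norm)$\times$($\bP$-norm). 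The symmetric coupling $-\tfrac{\tau^2}{4}(\bp,\ba\Divu\bw+\Divv\bz)$ is handled identically with the roles of trial/test swapped. Finally $-\tfrac{\tau^2}{4}(\Lambda_1\bp,\bq)$: using $0\preceq\Lambda_1\preceq\Lambda$ one gets $|\tfrac{\tau^2}{4}(\Lambda_1\bp,\bq)|\le\tfrac{\tau^2}{4}\|\Lambda_1^{1/2}\bp\|\|\Lambda_1^{1/2}\bq\|\le\tfrac{\tau^2}{4}\|\Lambda^{1/2}\bp\|\|\Lambda^{1/2}\bq\|=\|\bp\|_{\bP}\|\bq\|_{\bP}$. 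Summing all contributions and collecting the (absolute, parameter-free) constants into a single $C_b$ yields \eqref{bound}; the triangle inequality $a_1a_2+b_1b_2+\dots\le(\sum a_i^2)^{1/2}(\sum b_i^2)^{1/2}$-type bookkeeping converts the sum of products of norm-pieces into the product of the two full norms on the right-hand side of \eqref{bound}.

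The main obstacle — really the only nontrivial point — is making sure the divergence couplings are absorbed \emph{with a constant independent of all model parameters and of $n$}. This is exactly why the norm \eqref{norm} is weighted by $\Lambda^{-1}$ in the $\bU$-block and by $\Lambda$ in the $\bP$-block, and why $\Lambda$ was defined to dominate $\Lambda_1$ (for the $c_p$/$\tilde\beta$ term) and to contain the $\Lambda_3=\tfrac{\tau^2}{4\gamma}\ba\Lambda_4\ba$ piece; the factor $\gamma=\max\{\tfrac{\tau^2\mu}{2},\tfrac{\tau^2\lambda}{4},\g_u\}$ is precisely what lets the $\ba\Divu$ contribution inside $\Divv\bv+\ba\Divu\bu$ be controlled. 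I would therefore state as a short preliminary observation that $\Lambda_1,\Lambda_2,\Lambda_3\succeq 0$ (hence $\Lambda\succeq\Lambda_j$ for each $j$), deduce the operator-norm bound $\|\Lambda^{-1/2}\Lambda_1^{1/2}\|\le 1$ and similar, and then the term-by-term Cauchy--Schwarz estimates go through with $C_b$ an absolute constant (one may take, e.g., $C_b=\sqrt{5}$ after a mild rearrangement, but the precise value is immaterial). No finite-element, mesh, or $\tau\to 0$ considerations enter, since this is purely the continuous time-discrete level and every weight in the bilinear form appears verbatim (or dominated) in the norm.
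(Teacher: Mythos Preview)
Your proposal is correct and follows essentially the same approach as the paper: apply Cauchy--Schwarz term by term to the bilinear form \eqref{Bilinear-form} (inserting $\Lambda^{1/2}\Lambda^{-1/2}$ in the divergence couplings and using $0\preceq\Lambda_1\preceq\Lambda$ for the pressure block), then use Cauchy--Schwarz once more to combine the resulting sum of products into the product of the full norms. Your write-up is in fact somewhat more explicit than the paper's about why the resulting constant is parameter-free.
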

\begin{proof}
By applying Cauchy-Schwarz inequality on the bilinear form \eqref{Bilinear-form} we obtain
\begin{align*}
\mathcal{A}((\bu;\bv;\bp),&(\bw;\bz;\bq))\leq \frac{\mu\tau^2}{2}\|\eps(\bu)\|\cdot\|\eps(\bw))\| +\frac{\tau^2\lambda}{4}\|\divv \bu\|\cdot\|\divv \bw\|+ \|\Lambda_{uv}^{\frac{1}{2}} \left(\begin{array}{c}
\bu \\\bv \\ \bud\\ \bvd
\end{array}\right)\|\cdot\|\Lambda_{uv}^{\frac{1}{2}}\left(\begin{array}{c}
\bw \\\bz\\ \bwd\\ \bzd
\end{array}\right)\| \\&+\frac{\tau^2}{4}\|\Lambda^{\frac{1}{2}}\bp\|\cdot\|\Lambda^{-\frac{1}{2}}(\ba\Divu \bw+\Divv \bz)\| 
 +\frac{\tau^2}{4}\|\Lambda^{-\frac{1}{2}}(\ba\Divu \bu+\Divv \bv)\|\cdot\|\Lambda^{\frac{1}{2}}\bq\|  + \frac{\tau^2}{4}\|\Lambda_1^{\frac{1}{2}} \bp\|\cdot\|\Lambda_1^{\frac{1}{2}}\bq\|
\end{align*}
We obtain \eqref{bound}, by applying again Cauchy-Schwarz inequality. 
\end{proof}
The following theorem shows the inf-sup-condition (LBB) of $\mathcal{A}((.;.;.;.;.),(.;.;.;.;.))$ in the norm induced by~\eqref{norm}:
\begin{theorem}
There exists a constant $\omega> 0$ independent of all parameters and the network scale $n$, such that
\begin{align*}
\inf_{\substack {(\bu;\bv;\bud;\bvd;\bp)\\\in \bUd\times \bU\times \bVd\times \bV\times\bP}}\sup_{\substack {(\bw;\bz;\bwd;\bzd;\bq)\\\in   \bUd\times \bU\times \bVd\times \bV\times\bP}}\frac{\mathcal{A}((\bu;\bv;\bud;\bvd;\bp),(\bw;\bz;\bwd;\bzd;\bq))}{(\|(\bu;\bv;\bud;\bvd)\|_{ \bUd\times \bU\times \bVd\times \bV}+\|\bp\|_{\bP})\cdot(\|(\bw;\bz;\bwd;\bzd)\|_{ \bUd\times \bU\times \bVd\times \bV}+\|\bq\|_{\bP})} \geq \omega.
\end{align*}\label{inf-sup}
\end{theorem}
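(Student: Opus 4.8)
The plan is to establish the inf-sup condition by the standard ``constructive'' strategy for perturbed saddle point problems: given an arbitrary tuple $(\bu;\bv;\bud;\bvd;\bp)$, exhibit a test tuple $(\bw;\bz;\bwd;\bzd;\bq)$ for which the bilinear form $\mathcal{A}$ is bounded below by a constant multiple of the product of norms. First I would try the ``diagonal'' choice $(\bw;\bz;\bwd;\bzd;\bq)=(\bu;\bv;\bud;\bvd;-\bp)$. Plugging this into \eqref{Bilinear-form}, the symmetric positive semidefinite blocks contribute $\frac{\mu\tau^2}{2}\|\eps(\bu)\|^2 +\frac{\lambda\tau^2}{4}\|\divv\bu\|^2 + (\Lambda_{uv}(\bu,\bv,\bud,\bvd),(\bu,\bv,\bud,\bvd))$, the two off-diagonal coupling terms with $\bp$ cancel (since $\bq=-\bp$ flips the sign of one of them), and the $\Lambda_1$ term becomes $+\frac{\tau^2}{4}(\Lambda_1\bp,\bp)\ge 0$. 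This already controls everything in the norm \emph{except} the term $\frac{\tau^2}{4}\|\Lambda^{-1/2}(\Divv\bv+\ba\Divu\bu)\|^2$ inside the velocity norm and the full $\|\bp\|_{\bP}^2=\frac{\tau^2}{4}(\Lambda\bp,\bp)$: the diagonal test function only sees the $\Lambda_1$-part of $\Lambda=\Lambda_1+\Lambda_2+\Lambda_3$, so the contributions of $\Lambda_2=\frac{\tau^2}{4}\bar{\bA}_{22}^{-1}$ and $\Lambda_3=\frac{\tau^2}{4\gamma}\ba\Lambda_4\ba$ to the pressure norm are still missing, as is the divergence term.

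To recover those missing pieces I would add two correction test functions, in the spirit of the standard Boffi--Brezzi argument. For the $\Lambda_2$ (that is, $\bar{\bA}_{22}^{-1}$) and divergence contributions I would use the inf-sup condition for $\divv$ on $\bU\times\bP$ — Lemma~\ref{div} componentwise, plus Lemma~\ref{grad} for the $\bu$-to-$\sum q_i$ pairing — to pick, for a small parameter $\delta_1>0$, a velocity correction $\bz_\delta\in\bU$ (and $\bw_\delta\in\bUd$) with $\Divv\bz_\delta + \ba\Divu\bw_\delta$ aligned with an appropriate $\Lambda$-multiple of $\bp$, so that the term $-\frac{\tau^2}{4}(\bp,\ba\Divu\bw_\delta+\Divv\bz_\delta)$ produces a positive multiple of the missing pressure-norm components; the price is a controllable amount of $\|(\bw_\delta,\bz_\delta)\|$ absorbed into the already-present positive velocity terms via Young's inequality. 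The role of Lemma~\ref{det} (and Lemma~4 with determinant $(-1)^na^{n-1}b_n$) is to invert, explicitly and with $n$-independent bounds, the small coupling systems that arise when one tracks how $\Divv\bz_\delta$ and $\ba\Divu\bw_\delta$ together realize a prescribed right-hand side against the block structure of $\Lambda$; these determinant formulas guarantee the construction is well-defined and quantitatively stable uniformly in the number of networks $n$. The definition $\gamma:=\max\{\tfrac{\tau^2\mu}{2},\tfrac{\tau^2\lambda}{4},\g_u\}$ in $\Lambda_3$ is exactly what makes the $\ba\Lambda_4\ba$ term absorbable against the elasticity and $\g_u$ terms carried by the diagonal part.

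With the combined test function $(\bw;\bz;\bwd;\bzd;\bq)=(\bu+\delta_1\bw_\delta;\ \bv+\delta_1\bz_\delta;\ \bud;\ \bvd;\ -\bp)$ I would collect terms and, choosing $\delta_1$ small but fixed (independently of all parameters and of $n$), conclude
\begin{align*}
\mathcal{A}((\bu;\bv;\bud;\bvd;\bp),(\bw;\bz;\bwd;\bzd;\bq)) \ \geq\ \omega_0\,\bigl(\|(\bu;\bv;\bud;\bvd)\|_{\bUd\times\bU\times\bVd\times\bV}^2 + \|\bp\|_{\bP}^2\bigr),
\end{align*}
while simultaneously $\|(\bw;\bz;\bwd;\bzd)\|_{\bUd\times\bU\times\bVd\times\bV}+\|\bq\|_{\bP}\le C_1(\|(\bu;\bv;\bud;\bvd)\|_{\bUd\times\bU\times\bVd\times\bV}+\|\bp\|_{\bP})$ by the triangle inequality and the inf-sup bounds; dividing gives the claimed lower bound with $\omega=\omega_0/C_1$. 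The main obstacle I anticipate is the bookkeeping in the divergence-correction step: one must simultaneously extract the $\bar{\bA}_{22}^{-1}$-part and the $\ba\Lambda_4\ba$-part of the pressure norm from a single pair of velocity corrections while keeping every constant free of $n$ and of the huge parameter ranges (in particular uniformly as $c_{p_i}\to 0$ or $\tilde\beta_{ij}\to 0$, which is where $\Lambda_1$ degenerates and one genuinely needs the $\Lambda_2,\Lambda_3$ contributions). Verifying that the explicit determinant identities of Lemmas~\ref{det} and~4 deliver the required uniform invertibility is the technical heart of the argument; the rest is Cauchy--Schwarz and Young's inequality.
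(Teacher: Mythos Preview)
Your overall strategy --- diagonal test plus inf-sup-based velocity corrections --- is exactly the route the paper takes, but there is one genuine gap. With your test function $(\bu+\delta_1\bw_\delta;\bv+\delta_1\bz_\delta;\bud;\bvd;-\bp)$ the two coupling terms $-\frac{\tau^2}{4}(\bp,\ba\Divu\bw+\Divv\bz)$ and $-\frac{\tau^2}{4}(\ba\Divu\bu+\Divv\bv,\bq)$ with $\bq=-\bp$ combine to leave only $-\delta_1\frac{\tau^2}{4}(\bp,\ba\Divu\bw_\delta+\Divv\bz_\delta)$, which indeed recovers the missing $\Lambda_2$- and $\Lambda_3$-parts of $\|\bp\|_{\bP}^2$. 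But nothing in this computation produces the term $\frac{\tau^2}{4}\|\Lambda^{-1/2}(\Divv\bv+\ba\Divu\bu)\|^2$ that sits inside the velocity norm~\eqref{norm}; it is not dominated by the $\Lambda_{uv}$-block (which only carries $L^2$ of $\bv$, not $H(\divv)$), nor by the elasticity terms. The paper fixes this by perturbing the \emph{pressure} test function as well: it takes
\[
\bq \;=\; -\delta\bp \;-\; \tfrac{\tau^2}{4}\,\Lambda^{-1}\bigl(\Divv\bv+\ba\Divu\bu\bigr),
\]
so that the term $-\frac{\tau^2}{4}(\ba\Divu\bu+\Divv\bv,\bq)$ directly yields $+\frac{\tau^2}{4}\|\Lambda^{-1/2}(\Divv\bv+\ba\Divu\bu)\|^2$. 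This extra piece then also feeds into the $\Lambda_1$-term, generating a cross term $\frac{\tau^2}{4}(\Lambda_1\bp,\Lambda^{-1}(\Divv\bv+\ba\Divu\bu))$ that must be absorbed via Young's inequality (using $\Lambda_1\le\Lambda$); that is the bookkeeping you are actually missing.

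A smaller correction on the role of Lemma~\ref{det}: it is \emph{not} used to ``invert the small coupling systems'' in the divergence construction. The corrections $\bu_0,\bv_0$ are built directly from the scalar inf-sup conditions with prescribed divergences $\frac{\tau}{2\sqrt{\gamma}}\Lambda_4\ba\bp$ and $\frac{\tau}{2}\bar{\bA}_{22}^{-1/2}\bp$, no inversion needed. Lemma~\ref{det} enters only when bounding $\|\Lambda_{uv}^{1/2}(\tfrac{\tau}{2\sqrt{\gamma}}\bu_0;\tfrac{\tau}{2}\bar{\bA}_{22}^{-1/2}\bv_0;\mathbf{0};\mathbf{0})\|^2$: one writes this as $\frac{\tau^2}{4}(G(\bu_0,\bv_0),(\bu_0,\bv_0))$ for an explicit $(n{+}1)\times(n{+}1)$ arrow matrix $G$, uses the determinant formula to compute its characteristic polynomial, and concludes $\lambda_{\max}(G)\le 2$ uniformly in $n$ and in all parameters.
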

\begin{proof}
For any $(\bu;\bv;\bud;\bvd;\bp)\in \bUd\times\bU\times \bVd\times\bV\times \bP$, by Lemma \ref{div}, there exist
\begin{align}
\bv_0 &\in \bU ~~\hbox{such that}~~\Divv\bv_0 = \frac{\tau}{2}\bar{\bA}_{22}^{-\frac{1}{2}}\bp ~~\hbox{and}~~ \|\bv_0\|_{\divv} \leq { \beta_{v}^{-1}\|\Lambda_2^{-\frac{1}{2}}\bp\|} \label{psi},
\end{align}
and by Lemma \ref{grad}, there exists
\begin{align}
\bu_0 &\in \bUd~~\hbox{such that}~~ 
\Divu \bu_0 = \frac{\tau}{2\sqrt{\gamma}} \Lambda_4\ba\bp ,~~\|\bu_0\|_1 \leq \frac{\tau\beta_{s}^{-1}}{2\sqrt{\gamma}}\|\Lambda_4^{\frac{1}{2}}\ba\bp\|= \beta_s^{-1}\|\Lambda_3^{\frac{1}{2}}\bp\|.\label{u0}
\end{align}
Choose
\begin{align}
\bw = \delta \bu - \frac{\tau}{2\sqrt{\gamma}}\bu_0,\quad \bz = \delta \bv -  \frac{\tau}{2}\bar{\bA}_{22}^{-\frac{1}{2}} \bv_0, ~~ \bwd=\delta\bud, ~~\bzd=\delta\bvd, ~~\bq = -\delta \bp - \frac{\tau^2}{4}\Lambda^{-1}(\Divv \bv+\ba\Divu \bu),\label{wzq}
\end{align}
where $\delta$ is a positive constant to be determinant later. Before we verify the boundedness of $(\bw;\bz;\bwd;\bzd;\bq)$ by $(\bu;\bv;\bud;\bvd;\bp)$. we try to estimate $\|\Lambda_{uv}^{\frac{1}{2}} \left(\begin{array}{c}
\frac{\tau}{2\sqrt{\gamma}}\bu_0 \\  \frac{\tau}{2}\bar{\bA}_{22}^{-\frac{1}{2}} \bv_0 \\\textbf{0} \\\textbf{0}
\end{array}\right)\|^2$:	
\begin{align}
\|\Lambda_{uv}^{\frac{1}{2}} \left(\begin{array}{c}
\frac{\tau}{2\sqrt{\gamma}}\bu_0 \\  \frac{\tau}{2}\bar{\bA}_{22}^{-\frac{1}{2}} \bv_0 \\\textbf{0} \\\textbf{0}
\end{array}\right)&\|^2
=\frac{\tau^2}{4}\left(
\underbrace{\begin{bmatrix}
c &\frac{1}{\sqrt{\gamma}}\bar{\bA}_{12}\bar{\bA}_{22}^{-\frac{1}{2}}\\
(\frac{1}{\sqrt{\gamma}}\bar{\bA}_{12}\bar{\bA}_{22}^{-\frac{1}{2}})^T&\textbf{I}\end{bmatrix}}_{:=G}
\left(\begin{array}{c}
\bu_0 \\\bv_0
\end{array}\right),\left(\begin{array}{c}
\bu_0 \\\bv_0
\end{array}\right)\right) \nonumber\\
&\leq \frac{\tau^2}{4}\lambda_{\max}(G)(\|\bu_0\|^2+\|\bv_0\|^2) \underbrace{\leq}_{\eqref{u0},\eqref{psi}} \frac{\tau^2}{4}\lambda_{\max}(G)\left(\beta_s^{-2}\|\Lambda_3^{\frac{1}{2}}\bp\|^2+\beta_{v}^{-2}\|\Lambda_2^{-\frac{1}{2}}\bp\|^2\right) \label{g}
\end{align}
where $c:= \frac{\g_u}{\gamma}\leq 1, $ and let $ -b_i:=(\frac{1}{\sqrt{\gamma}}\bar{\bA}_{12}\bar{\bA}_{22}^{-\frac{1}{2}})_i=-\g_i\sqrt{\frac{1}{\g_{v,i}}}\frac{1}{\sqrt{\gamma}}, i=1,\cdots,n, $ then 
\begin{align}
\sum_{i=1}^{n}b_i^2=\sum_{i=1}^{n}\left(\g_i^2\frac{\varphi_i}{\rho_i+\varphi_i+\g_i}\frac{1}{\gamma}\right) \leq \sum_{i=1}^{n}\left(\frac{\g_i \varphi_i}{(1-\varphi)\rho_s +1+ \left(\sum_{i=1}^{n}\varphi_i \g_i\right)}\right)\leq 1 \label{sumb}
\end{align}
To find  the eigenvalues of the matrix $G$, we using Lemma \ref{det} :
\begin{align*}
\det(G-\lambda I)&=\begin{vmatrix}
c-\lambda &-b_1 & \cdots &\cdots & -b_n\\
-b_1 &1-\lambda&0&\cdots &0\\
\vdots &0&1-\lambda& &\vdots\\
\vdots &\vdots&&\ddots &\vdots\\
-b_n &0&\cdots&\cdots &1-\lambda\\ \end{vmatrix}=(1-\lambda)^{n-1}\left((1-\lambda)(c-\lambda)-\sum_{i=1}^{n}b_i^2\right)
\\&=(1-\lambda)^{n-1}\left(\lambda^2 -(1+c)\lambda + c-\sum_{i=1}^{n}b_i^2\right)=0
\end{align*}
which implies
$$\lambda_1=1, \quad\lambda_{2,3}=\frac{(1+c) \pm \sqrt{(1-c)^2+4\sum_{i=1}^{n}b_i^2}}{2},$$
\begin{align}
\lambda_{\max}^2&=\frac{\left((1+c) + \sqrt{(1-c)^2+4\sum_{i=1}^{n}b_i^2}\right)^2}{4}
\leq \frac{2(1+c)^2 + 2(1-c)^2+8\sum_{i=1}^{n}b_i^2}{4} \leq\frac{4+4c^2+8}{4} \underbrace{\leq}_{\eqref{sumb}} 4 \implies \lambda_{\max}(G) \leq 2 \label{maxg}
\end{align} 
finally from \eqref{g} we obtain:
\begin{align}
\|\Lambda_{uv}^{\frac{1}{2}} \left(\begin{array}{c}
\frac{\tau}{2\sqrt{\gamma}}\bu_0 \\  \frac{\tau}{2}\bar{\bA}_{22}^{-\frac{1}{2}} \bv_0 \\\textbf{0} \\\textbf{0}
\end{array}\right)\|^2\leq \frac{\tau^2}{2} \left(\beta_s^{-2}\|\Lambda_3^{\frac{1}{2}}\bp\|^2+\beta_{v}^{-2}\|\Lambda_2^{-\frac{1}{2}}\bp\|^2\right) \label{uv}
\end{align}
Let now verify the boundedness of $(\bw;\bz;\bwd;\bzd;\bq)$ by $(\bu;\bv;\bud;\bvd;\bp)$.Firstly for  $(\bw;\bz;\bwd;\bzd)$ we have,
\begin{align*}
\|(\bw;\bz;\bwd;\bzd)\|^2_{\bUd\times\bU\times\bVd\times\bV} &= \|\left(\delta \bu - \frac{\tau}{2\sqrt{\gamma}}\bu_0;\delta \bv -  \frac{\tau}{2}\bar{\bA}_{22}^{-\frac{1}{2}} \bv_0;\delta\bud;\delta\bvd\right)\|^2_{\bUd\times\bU\times\bVd\times\bV}\nonumber\\
&= \frac{\tau^2\mu}{2}\|\eps(\delta \bu - \frac{\tau}{2\sqrt{\gamma}}\bu_0)\|^2 +\frac{\tau^2\lambda}{4}\|\divv (\delta \bu - \frac{\tau}{2\sqrt{\gamma}}\bu_0)\|^2 + \|\Lambda_{uv}^{\frac{1}{2}} \left(\begin{array}{c}
\delta \bu - \frac{\tau}{2\sqrt{\gamma}}\bu_0 \nonumber\\\delta \bv -  \frac{\tau}{2}\bar{\bA}_{22}^{-\frac{1}{2}} \bv_0\\\delta\bud\\ \delta\bvd
\end{array}\right)\|^2\\
& + \frac{\tau^2}{4}\|\Lambda^{-\frac{1}{2}}\left(\Divv (\delta \bv -  \frac{\tau}{2}\bar{\bA}_{22}^{-\frac{1}{2}} \bv_0)+\ba\Divu (\delta \bu - \frac{\tau}{2\sqrt{\gamma}}\bu_0)\right)\|^2,
\end{align*}
by applying triangle inequality it follows
\begin{align*}
\leq &\tau^2\mu\delta^2\|\eps(\bu)\|^2 + \tau^2\mu\|\eps(\frac{\tau}{2\sqrt{\gamma}}\bu_0)\|^2+\frac{\tau^2\delta^2\lambda}{2}\|\divv\bu\|^2 +\frac{\tau^2\lambda}{2}\|\divv \frac{\tau}{2\sqrt{\gamma}}\bu_0\|^2 + 2\delta^2\|\Lambda_{uv}^{\frac{1}{2}} \left(\begin{array}{c}
\bu\\ \bv \\\bud\\ \bvd
\end{array}\right)\|^2 
\\&+ 2\|\Lambda_{uv}^{\frac{1}{2}} \left(\begin{array}{c}
\frac{\tau}{2\sqrt{\gamma}}\bu_0 \\ \frac{\tau}{2}\bar{\bA}_{22}^{-\frac{1}{2}} \bv_0 \\\textbf{0} \\\textbf{0}
\end{array}\right)\|^2 + \frac{\tau^2\delta^2}{2}\|\Lambda^{-\frac{1}{2}}\left(\Divv \bv + \ba \Divu \bu \right)\|^2 + \frac{\tau^2}{2}\|\Lambda^{-\frac{1}{2}}( \frac{\tau}{2}\bar{\bA}_{22}^{-\frac{1}{2}}\Divv  \bv_0+ \frac{\tau}{2\sqrt{\gamma}}\ba \Divu \bu_0)\|^2, 
\end{align*}
by \eqref{psi} ,\eqref{u0} ,  \eqref{uv} and definition of $\g$, we have
\begin{align}
\leq& \tau^2\mu\delta^2\|\eps(\bu)\|^2 + \frac{\tau^2\beta_s^{-2}}{2}\|\Lambda_3^{\frac{1}{2}}\bp\|^2+\frac{\tau^2\delta^2\lambda}{2}\|\divv\bu\|^2 +\frac{\tau^2\beta_s^{-2}}{2}\|\Lambda_3^{\frac{1}{2}}\bp\|^2 + 2\delta^2\|\Lambda_{uv}^{\frac{1}{2}} \left(\begin{array}{c}
\bu\\ \bv\\\bud\\ \bvd
\end{array}\right)\|^2 \nonumber\\&
+ \tau^2 \left(\beta_s^{-2}\|\Lambda_3^{\frac{1}{2}}\bp\|^2+\beta_v^{-2}\|\Lambda_2^{\frac{1}{2}}\bp\|^2\right) + \frac{\tau^2\delta^2}{2}\|\Lambda^{-\frac{1}{2}}\left(\Divv \bv +\ba \Divu \bu \right)\|^2 + \frac{\tau^2}{2}\|\Lambda^{-\frac{1}{2}} (\Lambda_2 + \Lambda_3)\bp\|^2  \label{wzbound}
\end{align}
Secondly for $\bq$ we have
\begin{align}
\|-\delta \bp - \Lambda^{-1}(\Divv \bv+\ba\Divu \bu)\|^2_{\bP} &= \frac{\tau^2}{4}( \Lambda\left(-\delta \bp - \Lambda^{-1}(\Divv \bv+\ba\Divu \bu)\right), -\delta \bp - \Lambda^{-1}(\Divv \bv+\ba\Divu \bu))\nonumber\\
\intertext{by applying triangle inequality it follows}
&\leq \frac{\tau^2\delta^2}{2}\|\Lambda^{\frac{1}{2}}\bp\|^2 + \frac{\tau^2}{2}\|\Lambda^{-\frac{1}{2}}(\Divv \bv+\ba\Divu \bu)\|^2 \label{qbound}
\end{align}
Collecting the estimates \eqref{wzbound} and \eqref{qbound}, we obtain
$$\|(\bw;\bz;\bwd;\bzd)\|^2_{\bUd\times\bU\times\bVd\times\bV} +\|\bq\|^2_{\bP} \leq (2\delta^2 + 2 + 8\beta_s^{-2} + 4\beta_v^{-2})(\|(\bu;\bv;\bud;\bvd)\|^2_{\bUd\times\bU\times\bVd\times\bV} +\|\bp\|^2_{\bP})$$
Stays to show the coercivity of $\mathcal{A}((\bu; \bv;\bud;\bvd;\bp),(\bw; \bz;\bwd;\bzd;\bq))$. Using the definition of  $\mathcal{A}((\bu; \bv;\bud;\bvd;\bp),(\bw; \bz;\bwd;\bzd;\bq))$ and $(\bw; \bz;\bwd;\bzd;\bq)$ from~\eqref{wzq}, it follow
\begin{align*}
\mathcal{A}((\bu;\bv;\bud;\bvd;\bp)&,(\bw,\bz;\bwd;\bzd;\bq))=
\frac{\mu\tau^2}{2}(\eps(\bu),\eps(\bw)) +\frac{\lambda\tau^2}{4}(\divv \bu,\divv \bw)+ (\Lambda_{uv} \left(\begin{array}{c}
\bu \\\bv \\ \bud\\ \bvd
\end{array}\right),\left(\begin{array}{c}
\bw \\\bz\\ \bwd\\ \bzd
\end{array}\right)) \nonumber
\\&-\frac{\tau^2}{4}(\bp,\ba\Divu \bw+\Divv \bz) -\frac{\tau^2}{4}(\ba\Divu \bu+\Divv \bv,\bq)  - \frac{\tau^2}{4}(\Lambda_1 \bp,\bq)
\\&=\frac{\mu\tau^2}{2}(\eps(\bu),\eps(\delta \bu - \frac{\tau}{2\sqrt{\gamma}}\bu_0)) +\frac{\lambda\tau^2}{4}(\divv \bu,\divv (\delta \bu - \frac{\tau}{2\sqrt{\gamma}}\bu_0))
+ (\Lambda_{uv} \left(\begin{array}{c}
\bu \\\bv \\ \bud\\ \bvd
\end{array}\right),\left(\begin{array}{c}
\delta \bu - \frac{\tau}{2\sqrt{\gamma}}\bu_0 \\
\delta \bv -  \frac{\tau}{2}\bar{\bA}_{22}^{-\frac{1}{2}} \bv_0\\
\delta\bud\\
\delta\bvd
\end{array}\right))
\\&
-\frac{\tau^2}{4}(\bp,\ba\Divu (\delta \bu - \frac{\tau}{2\sqrt{\gamma}}\bu_0)
+\Divv (\delta \bv -  \frac{\tau}{2}\bar{\bA}_{22}^{-\frac{1}{2}} \bv_0))
-\frac{\tau^2}{4}(\ba\Divu \bu+\Divv \bv,-\delta \bp - \Lambda^{-1}(\Divv \bv+\ba\Divu \bu))
\\&
- \frac{\tau^2}{4}(\Lambda_1 \bp,-\delta \bp - \Lambda^{-1}(\Divv \bv+\ba\Divu \bu))
\end{align*}
from \eqref{psi} and \eqref{u0}, it follow,
\begin{align*} 
&=\frac{\delta\mu\tau^2}{2}\|\eps(\bu)\|^2 - 
\frac{\mu\tau^2}{2}(\eps(\bu),\eps(\frac{\tau}{2\sqrt{\gamma}}\bu_0)) +\frac{\delta\lambda\tau^2}{4}\|\divv \bu\|^2 - \frac{\lambda\tau^2}{4}(\divv \bu,\divv (\frac{\tau}{2\sqrt{\gamma}}\bu_0))
\\
&+ \delta\|\Lambda_{uv}^{\frac{1}{2}} \left(\begin{array}{c}
\bu \\\bv \\ \bud\\ \bvd
\end{array}\right)\|^2 - 
(\Lambda_{uv} \left(\begin{array}{c}
\bu \\\bv \\ \bud\\ \bvd
\end{array}\right),\left(\begin{array}{c}
\frac{\tau}{2\sqrt{\gamma}}\bu_0 \\
\frac{\tau}{2}\bar{\bA}_{22}^{-\frac{1}{2}} \bv_0\\
\textbf{0}\\
\textbf{0}
\end{array}\right)) +\frac{\tau^2}{4}(\bp,(\Lambda_2+\Lambda_3)\bp)
+\frac{\tau^2}{4}\|\Lambda^{-\frac{1}{2}}(\Divv \bv+\ba\Divu \bu)\|^2 + \frac{\delta\tau^2}{4}\|\Lambda_1^{\frac{1}{2}} \bp\|^2
\\&
+ \frac{\tau^2}{4}(\Lambda_1 \bp,\Lambda^{-1}(\Divv \bv+\ba\Divu \bu))
\end{align*}
by using Young's inequality, we obtain,
\begin{align*}
&\geq\frac{\delta\mu\tau^2}{2}\|\eps(\bu)\|^2 - 
\frac{\mu\tau^2\epsilon_1}{4}\|\eps(\bu)\|^2 - \frac{\mu\tau^2}{4\epsilon_1}\frac{\tau^2}{4\gamma}\|\eps(\bu_0)\|^2 +\frac{\delta\lambda\tau^2}{4}\|\divv \bu\|^2 - \frac{\lambda\tau^2\epsilon_2}{8}\|\divv \bu\|^2  -\frac{\lambda\tau^2}{8\epsilon_2}\frac{\tau^2}{4\gamma}\|\divv \bu_0\|^2 \nonumber\\
&+ \delta\|\Lambda_{uv}^{\frac{1}{2}} \left(\begin{array}{c}
\bu \\\bv \\ \bud\\ \bvd
\end{array}\right)\|^2 - \frac{\epsilon_3}{2}\|\Lambda_{uv}^{\frac{1}{2}} \left(\begin{array}{c}
\bu \\\bv \\ \bud\\ \bvd
\end{array}\right)\|^2- 
\frac{1}{2\epsilon_3}\|\Lambda_{uv}^{\frac{1}{2}}\left(\begin{array}{c}
\frac{\tau}{2\sqrt{\gamma}}\bu_0 \\
\frac{\tau}{2}\bar{\bA}_{22}^{-\frac{1}{2}} \bv_0\\
\textbf{0}\\
\textbf{0}
\end{array}\right)\|^2 +\frac{\tau^2}{4}\|\Lambda_2^{\frac{1}{2}}\bp\|^2 +\frac{\tau^2}{4}\|\Lambda_3^{\frac{1}{2}}\bp\|^2
\\& +\frac{\tau^2}{4}\|\Lambda^{-\frac{1}{2}}(\Divv \bv+\ba\Divu \bu)\|^2 + \frac{\delta\tau^2}{4}\|\Lambda_1^{\frac{1}{2}} \bp\|^2 - \frac{\tau^2}{8}\|\Lambda^{-\frac{1}{2}}\Lambda_1 \bp\|^2 -\frac{\tau^2}{8}\|\Lambda^{-\frac{1}{2}}(\Divv \bv+\ba\Divu \bu)\|^2
\end{align*}
by using again \eqref{u0} , \eqref{uv} and the definition of $\g$ and $\Lambda$, we obtain,
\begin{align*}
&\geq\frac{\delta\mu\tau^2}{2}\|\eps(\bu)\|^2 - 
\frac{\mu\tau^2\epsilon_1}{4}\|\eps(\bu)\|^2 - \frac{\beta_s^{-2}\tau^2}{8\epsilon_1}\|\Lambda_3^{\frac{1}{2}}\bp\|^2 +\frac{\delta\lambda\tau^2}{4}\|\divv \bu\|^2 - \frac{\lambda\tau^2\epsilon_2}{8}\|\divv \bu\|^2  -\frac{\beta_s^{-2}\tau^2}{8\epsilon_2}\|\Lambda_3^{\frac{1}{2}}\bp\|^2 \nonumber\\
&+ \delta\|\Lambda_{uv}^{\frac{1}{2}} \left(\begin{array}{c}
\bu \\\bv \\ \bud\\ \bvd
\end{array}\right)\|^2 - \frac{\epsilon_3}{2}\|\Lambda_{uv}^{\frac{1}{2}} \left(\begin{array}{c}
\bu \\\bv \\ \bud\\ \bvd
\end{array}\right)\|^2- 
\frac{1}{2\epsilon_3}\frac{\tau^2}{2} \left(\beta_s^{-2}\|\Lambda_3^{\frac{1}{2}}\bp\|^2+\beta_{v}^{-2}\|\Lambda_2^{-\frac{1}{2}}\bp\|^2\right)  +\frac{\tau^2}{4}\|\Lambda_2^{\frac{1}{2}}\bp\|^2 +\frac{\tau^2}{4}\|\Lambda_3^{\frac{1}{2}}\bp\|^2
\\& +\frac{\tau^2}{4}\|\Lambda^{-\frac{1}{2}}(\Divv \bv+\ba\Divu \bu)\|^2 + \frac{\delta\tau^2}{4}\|\Lambda_1^{\frac{1}{2}} \bp\|^2 - \frac{\tau^2}{8}\|\Lambda_1^{\frac{1}{2}} \bp\|^2 -\frac{\tau^2}{8}\|\Lambda^{-\frac{1}{2}}(\Divv \bv+\ba\Divu \bu)\|^2
\end{align*}

Let $\epsilon_1 = 2\beta_{s}^{-2} , \epsilon_2 = 2\beta_{s}^{-2}, \epsilon_3 = 4\max\lbrace{\beta_{v}^{-2},\beta_{s}^{-2}\rbrace}:= 4\beta^{-2}$, we obtain
\begin{align*}
&\geq\frac{(\delta-\beta_s^{-2})\mu\tau^2}{2}\|\eps(\bu)\|^2  - \frac{\tau^2}{16}\|\Lambda_3^{\frac{1}{2}}\bp\|^2 +\frac{(\delta-\beta_s^{-2})\lambda\tau^2}{4}\|\divv \bu\|^2 -\frac{\tau^2}{16}\|\Lambda_3^{\frac{1}{2}}\bp\|^2
+ (\delta-2\beta^{-2})\|\Lambda_{uv}^{\frac{1}{2}} \left(\begin{array}{c}
\bu \\\bv \\ \bud\\ \bvd
\end{array}\right)\|^2
\nonumber\\
& - 
\frac{\tau^2}{16} \left(\|\Lambda_3^{\frac{1}{2}}\bp\|^2+\|\Lambda_2^{-\frac{1}{2}}\bp\|^2\right)  +\frac{\tau^2}{4}\|\Lambda_2^{\frac{1}{2}}\bp\|^2 +\frac{\tau^2}{4}\|\Lambda_3^{\frac{1}{2}}\bp\|^2
+\frac{\tau^2}{4}\|\Lambda^{-\frac{1}{2}}(\Divv \bv+\ba\Divu \bu)\|^2 + \frac{\delta\tau^2}{4}\|\Lambda_1^{\frac{1}{2}} \bp\|^2 - \frac{\tau^2}{8}\|\Lambda_1^{\frac{1}{2}} \bp\|^2
\\&  -\frac{\tau^2}{8}\|\Lambda^{-\frac{1}{2}}(\Divv \bv+\ba\Divu \bu)\|^2
\end{align*}
Let $\delta:=2\beta^{-2}+ \frac{1}{4}$ we obtain Finally,
$$\mathcal{A}((\bu; \bv;\bud;\bvd;\bp),(\bw; \bz;\bwd;\bzd;\bq))\geq \frac{1}{4}(\|(\bu;\bv;\bud;\bvd)\|^2_{\bUd\times\bU\times\bVd\times\bV}+\|\bp\|^2_{\bP})$$
\end{proof}
The above theorem implies the following stability estimate.
\begin{corollary}\label{eq:67}
Let $(\bu;\bv;\bud;\bvd) \in \bUd\times\bU\times\bVd\times\bV\times \bP$ be the solution
of~\eqref{eq:MPET weak}. Then there holds the estimate
\begin{equation}
\|(\bu;\bv;\bud;\bvd)\|^2_{\bUd\times\bU\times\bVd\times\bV}+\|\bp\|_{\bP}\leq C_1 (\|(\bG_1;\bG_2;\bG_3;\bG_4)\|^2_{\bUd^*\times\bU^*\times\bVd^*\times\bV^*}+\|\bG_5\|_{\bP^*}),
\end{equation} 
where $C_1$ is a constant independent of all parameters and the network scale $n$ 
and $$\|(\bG_1;\bG_2;\bG_3;\bG_4)\|^2_{\bUd^*\times\bU^*\times\bVd^*\times\bV^*}=
\sup\limits_{(\bw;\bz;\bwd;\bzd)\in \bUd\times\bU\times\bVd\times\bV}\frac{((\bG_1;\bG_2;\bG_3;\bG_4), (\bw;\bz;\bwd;\bzd))}{\|(\bw;\bz;\bwd;\bzd)\|_{\bUd\times\bU\times\bVd\times\bV}}, \qquad \|\bG_5\|_{\bP^*}=
\sup\limits_{\bq\in \bP}\frac{(\bG_5,\bq)}{\|\bq\|_{\bP}}=\|\Lambda^{-\frac{1}{2}} \bG_5\|.$$
\end{corollary}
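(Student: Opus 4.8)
The estimate is the standard a priori bound (of Babuska--Aziz type) that follows from the inf-sup condition of Theorem~\ref{inf-sup}; the substantive work has already been done there and in Theorem~\ref{theorem-bound}, so the plan is only to run that argument carefully. First I would identify the residual of the solution with the load functional: comparing the definition~\eqref{Bilinear-form} of $\mathcal{A}$ term by term with the five equations of~\eqref{eq:MPET weak}, one sees that the solution $(\bu;\bv;\bud;\bvd;\bp)$ satisfies
\[
\mathcal{A}((\bu;\bv;\bud;\bvd;\bp),(\bw;\bz;\bwd;\bzd;\bq))=(\bG_1,\bw)+(\bG_2,\bz)+(\bG_3,\bwd)+(\bG_4,\bzd)+(\bG_5,\bq)
\]
for every test function $(\bw;\bz;\bwd;\bzd;\bq)\in\bUd\times\bU\times\bVd\times\bV\times\bP$. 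The only delicate point here is the sign bookkeeping in the pressure block, which is consistent because $\Lambda_1=-(\tfrac{\tau}{2}\bL_{33}+\bD_{33})$, so that the term $-\tfrac{\tau^2}{4}(\Lambda_1\bp,\bq)$ in $\mathcal{A}$ reproduces exactly the term $((\tfrac{\tau^3}{8}\bL_{33}+\tfrac{\tau^2}{4}\bD_{33})\bp,\bq)$ appearing in~\eqref{eq:MPET weak}.

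Next I would apply Theorem~\ref{inf-sup} with the solution occupying the trial slot of the infimum. This yields $\omega\,(\|(\bu;\bv;\bud;\bvd)\|_{\bUd\times\bU\times\bVd\times\bV}+\|\bp\|_{\bP})\le\|\ell\|$, where $\ell$ denotes the linear functional on the right-hand side above and $\|\ell\|$ its norm in the relevant dual. Splitting $\ell$ into its $(\bw;\bz;\bwd;\bzd)$-part and its $\bq$-part, using the definitions of $\|\cdot\|_{\bUd^*\times\bU^*\times\bVd^*\times\bV^*}$ and $\|\cdot\|_{\bP^*}$ given in the statement, together with the elementary inequality $ab+cd\le(a+c)(b+d)$ for nonnegative reals, I obtain $\|\ell\|\le\|(\bG_1;\bG_2;\bG_3;\bG_4)\|_{\bUd^*\times\bU^*\times\bVd^*\times\bV^*}+\|\bG_5\|_{\bP^*}$. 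Combining the two inequalities gives the linear estimate $\|(\bu;\bv;\bud;\bvd)\|_{\bUd\times\bU\times\bVd\times\bV}+\|\bp\|_{\bP}\le\omega^{-1}(\|(\bG_1;\bG_2;\bG_3;\bG_4)\|_{\bUd^*\times\bU^*\times\bVd^*\times\bV^*}+\|\bG_5\|_{\bP^*})$, and squaring this together with $a^2+b^2\le(a+b)^2\le 2(a^2+b^2)$ produces the asserted bound with $C_1=2\omega^{-2}$; this constant inherits its independence of all model parameters and of the network scale $n$ directly from $\omega$.

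Finally I would verify the closed form $\|\bG_5\|_{\bP^*}=\|\Lambda^{-\frac{1}{2}}\bG_5\|$ (up to the fixed $\tau$-normalisation built into $\|\cdot\|_{\bP}$): since $\|\bq\|_{\bP}^2=\tfrac{\tau^2}{4}(\Lambda\bq,\bq)$ and $\Lambda=\Lambda_1+\Lambda_2+\Lambda_3$ is symmetric positive definite, so that $\Lambda^{\frac{1}{2}}$ is an isomorphism on $\bP$, the substitution $\bq=\Lambda^{-\frac{1}{2}}\br$ turns the dual norm $\sup_{\bq}(\bG_5,\bq)/\|\bq\|_{\bP}$ into a Euclidean supremum $\sup_{\br}(\Lambda^{-\frac{1}{2}}\bG_5,\br)/\|\br\|$, which equals $\|\Lambda^{-\frac{1}{2}}\bG_5\|$. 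As anticipated, no step presents a genuine obstacle: the real estimates were carried out in Theorems~\ref{theorem-bound} and~\ref{inf-sup}, and what remains is the bookkeeping of signs and constants in the residual identity together with this elementary evaluation of the dual pressure norm.
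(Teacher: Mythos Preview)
Your proposal is correct and is precisely the standard Babu\v{s}ka--Aziz inference that the paper has in mind when it simply states that ``the above theorem implies the following stability estimate'' without giving a separate proof. Your careful check of the sign bookkeeping via $\Lambda_1=-(\tfrac{\tau}{2}\bL_{33}+\bD_{33})$ and your explicit evaluation of $\|\bG_5\|_{\bP^*}$ (including the $\tau$-normalisation that the paper suppresses) go a bit beyond what the paper spells out, but the approach is the same.
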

\subsection{Stability of the fully discrete problem}
The main result of this section is a proof of the uniform well-posedness of problem \eqref{a2dis} under the norms
induced by \eqref{U_hnorm} and \eqref{norm}. 
\begin{theorem}
There exists a constant $C_d$ independent of  all parameters , the network scale~$n$  and the mesh size $h$, such that  for any $(\bu_h;\bv_h;\bud_h;\bvd_h;\bp_h),(\bw_h,\bz_h;\bwd_h;\bzd_h;\bq_h)\in \bUd_h\times\bU_h\times\bVd_h\times\bV_h\times \bP_h$
\begin{align*}
|\mathcal{A}_h((\bu_h;\bv_h;\bud_h;\bvd_h;\bp_h),(\bw_h;\bz_h;\bwd_h;\bzd_h;\bq_h))|\leq C_d &(\|(\bu_h,\bv_h,\bud_h,\bvd_h)\|_{\bUd_h\times\bU\times\bVd\times\bV}+\|\bp_h\|_{\bP})\\\cdot&(\|(\bw_h,\bz_h,\bwd_h,\bzd_h)\|_{\bUd_h\times\bU\times\bVd\times\bV}+\|\bq_h\|_{\bP}).
\end{align*}\label{theorem-bound-dis} 
\end{theorem}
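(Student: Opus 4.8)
The plan is to observe that the discrete bilinear form $\mathcal{A}_h$ in~\eqref{dis-Bilinear-form} differs from the continuous form $\mathcal{A}$ in~\eqref{Bilinear-form} only by the substitution of $\frac{\mu\tau^2}{2}a_h(\bu_h,\bw_h)$ for $\frac{\mu\tau^2}{2}(\eps(\bu),\eps(\bw))$, and that the mesh-dependent norm in~\eqref{U_hnorm} differs from the one in~\eqref{norm} only by the analogous substitution of $\frac{\mu\tau^2}{2}\|\bu\|_{DG}^2$ for $\frac{\mu\tau^2}{2}\|\eps(\bu)\|^2$, the pressure norm $\|\cdot\|_{\bP}$ being unchanged. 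Accordingly I would carry out the proof of Theorem~\ref{theorem-bound} term by term on~\eqref{dis-Bilinear-form}, changing only the estimate of the elasticity contribution.

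For that contribution, since the chosen local spaces $BDM_l(T)$ and $BDFM_l(T)$ consist of polynomials, every $\bu_h\in\bUd_h$ is piecewise smooth, so $\bUd_h\subset H^2(\mathcal{T}_h)^d$ and the continuity bound~\eqref{continuity:a_h} applies: $\frac{\mu\tau^2}{2}|a_h(\bu_h,\bw_h)|\le c_2\,(\mu\tau^2/2)^{1/2}\|\bu_h\|_{DG}\,(\mu\tau^2/2)^{1/2}\|\bw_h\|_{DG}$. Since $\frac{\mu\tau^2}{2}\|\bu_h\|_{DG}^2$ is one of the nonnegative summands of $\|(\bu_h;\bv_h;\bud_h;\bvd_h)\|^2_{\bUd_h\times\bU\times\bVd\times\bV}$, this term is bounded by $c_2$ times the product of the two full norms.

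The $\lambda$-term $\frac{\tau^2\lambda}{4}(\divv\bu_h,\divv\bw_h)$, the $\Lambda_{uv}$-term, the two mixed divergence--pressure terms, and the $\Lambda_1$-term of~\eqref{dis-Bilinear-form} are the exact restrictions to the finite element subspaces of the corresponding terms of $\mathcal{A}$ and are untouched by the DG modification, so the Cauchy--Schwarz estimates from the proof of Theorem~\ref{theorem-bound} carry over verbatim: for the $\Lambda_{uv}$-term one uses $\Lambda_{uv}\succeq 0$ and recognizes $\|\Lambda_{uv}^{1/2}(\cdot)\|^2$ as a summand of the norm; for the mixed terms one inserts $\Lambda^{1/2}\Lambda^{-1/2}$ and uses $\frac{\tau^2}{4}\|\Lambda^{1/2}\bp_h\|^2=\|\bp_h\|^2_{\bP}$ together with the fact that $\frac{\tau^2}{4}\|\Lambda^{-1/2}(\ba\Divu\bu_h+\Divv\bv_h)\|^2$ is a summand of the norm; for the $\Lambda_1$-term one uses $0\preceq\Lambda_1\preceq\Lambda$ (since $\Lambda_1,\Lambda_2,\Lambda_3$ are positive semidefinite) to get $\|\Lambda_1^{1/2}\bp_h\|\le\|\Lambda^{1/2}\bp_h\|$ and hence a bound by $\|\bp_h\|_{\bP}\|\bq_h\|_{\bP}$. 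Adding these bounds and applying the Cauchy--Schwarz inequality in $\mathbb{R}^2$ then gives the stated estimate with $C_d$ an absolute multiple of $\max\{1,c_2\}$, hence independent of all model parameters, of $n$, and of $h$, because $c_2$ and the hidden constants are.

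I do not expect a genuine obstacle: the estimate is essentially a transcription of Theorem~\ref{theorem-bound}. The only points requiring care are, first, the justification that the continuity bound~\eqref{continuity:a_h}, which is stated on $H^2(\mathcal{T}_h)^d$, may be invoked on $\bUd_h$ --- which holds precisely because the local spaces are polynomial --- and, second, the bookkeeping of the $\tau$, $\mu$, and $\lambda$ weights, so that the factors $\frac{\tau^2}{4}$, $\frac{\mu\tau^2}{2}$, and $\frac{\tau^2\lambda}{4}$ are absorbed symmetrically into the two norm factors, leaving a constant $C_d$ free of $\tau$, $\mu$, $\lambda$, and every other parameter.
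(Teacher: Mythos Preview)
Your proposal is correct and follows exactly the approach the paper takes: the paper's own proof is the single sentence ``The proof of this theorem can be obtained by following the proof of Theorem~\ref{theorem-bound},'' and your write-up is precisely that transcription, with the only substantive change being the use of~\eqref{continuity:a_h} to bound $\frac{\mu\tau^2}{2}a_h(\bu_h,\bw_h)$ by the DG-norm contribution in~\eqref{U_hnorm}. If anything, you have supplied more detail than the paper does.
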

\begin{proof}
The proof of this theorem can be obtained by following the proof of Theorem \ref{theorem-bound} 
\end{proof}
The following theorem shows the inf-sup-condition (LBB) of $\mathcal{A}_h((.;.;.;.;.),(.;.;.;.;.))$
\begin{theorem}
There exists a constant $\omega_h> 0$ independent of all parameters , the network scale $n$ and  and the mesh size $h$, such that
\begin{align*}
\inf_{\substack {(\bu_h;\bv_h;\bud_h;\bvd_h;\bp_h)\in \\ \bUd_h\times \bU_h\times \bVd_h\times \bV_h\times\bP_h}}\sup_{\substack {(\bw_h;\bz_h;\bwd_h;\bzd_h;\bq_h)\in \\  \bUd_h\times \bU_h\times \bVd_h\times \bV_h\times\bP_h}}\frac{\mathcal{A}_h((\bu_h;\bv_h;\bud_h;\bvd_h;\bp_h),(\bw_h;\bz_h;\bwd_h;\bzd_h;\bq_h))}{(\|(\bu_h;\bv_h;\bud_h;\bvd_h)\|_{ \bUd_h\times \bU\times \bVd\times \bV}+\|\bp_h\|_{\bP})(\|(\bw_h;\bz_h;\bwd_h;\bzd_h)\|_{ \bUd_h\times \bU\times \bVd\times \bV}+\|\bq_h\|_{\bP})} \geq \omega_h.
\end{align*}\label{inf-sup-dis}
\end{theorem}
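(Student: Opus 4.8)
The plan is to mirror the proof of Theorem~\ref{inf-sup} line by line, replacing each ingredient by its mesh-dependent counterpart from the preliminaries of Section~\ref{sec:discretization}: the discrete inf-sup conditions~\eqref{divdis} in place of Lemmas~\ref{div} and~\ref{grad}, the discrete Korn norm equivalence~\eqref{Korninequality} wherever $\|\bu\|_{DG}$ and $\|\bu\|_{1,h}$ need to be exchanged, and the continuity~\eqref{continuity:a_h} and coercivity~\eqref{coercivity:a_h} of $a_h(\cdot,\cdot)$ in place of the trivial identity $a_h\equiv(\eps(\cdot),\eps(\cdot))$ available in the conforming continuous case. All constants produced this way ($\beta_{s,h},\beta_{v,h},c_0,c_1,c_2,\alpha_a$) are, by the cited statements, independent of the model parameters, of the network scale $n$, and of the mesh size $h$, so the resulting $\omega_h$ will be as well.

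Concretely, I would fix $(\bu_h;\bv_h;\bud_h;\bvd_h;\bp_h)\in\bUd_h\times\bU_h\times\bVd_h\times\bV_h\times\bP_h$ and use~\eqref{divdis} to construct \emph{discrete} auxiliary fields $\bv_0\in\bU_h$ and $\bu_0\in\bUd_h$ with $\Divv\bv_0=\tfrac{\tau}{2}\bar{\bA}_{22}^{-1/2}\bp_h$, $\|\bv_0\|_{\divv}\le\beta_{v,h}^{-1}\|\Lambda_2^{-1/2}\bp_h\|$, and $\Divu\bu_0=\tfrac{\tau}{2\sqrt{\gamma}}\Lambda_4\ba\bp_h$, $\|\bu_0\|_{1,h}\le\beta_{s,h}^{-1}\|\Lambda_3^{1/2}\bp_h\|$, in exact analogy with~\eqref{psi}--\eqref{u0}. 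The one point requiring care here is the \emph{admissibility} of these divergence constraints in the finite element spaces: the prescribed right-hand sides are (componentwise) elements of $\bP_h$, and because the chosen $BDM$/$BDFM$--$RT$--$P$ spaces are strongly conservative, the discrete divergence maps onto the pressure spaces, so~\eqref{divdis} can be inverted to produce $\bv_0,\bu_0$ within $\bU_h$ and $\bUd_h$. I would then take the test tuple exactly as in~\eqref{wzq}, $\bw_h=\delta\bu_h-\tfrac{\tau}{2\sqrt{\gamma}}\bu_0$, $\bz_h=\delta\bv_h-\tfrac{\tau}{2}\bar{\bA}_{22}^{-1/2}\bv_0$, $\bwd_h=\delta\bud_h$, $\bzd_h=\delta\bvd_h$, $\bq_h=-\delta\bp_h-\tfrac{\tau^2}{4}\Lambda^{-1}(\Divv\bv_h+\ba\Divu\bu_h)$, with $\delta>0$ fixed at the end. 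Boundedness of this tuple by $(\bu_h;\bv_h;\bud_h;\bvd_h;\bp_h)$ then goes through verbatim: the eigenvalue bound $\lambda_{\max}(G)\le2$ of~\eqref{maxg} is purely algebraic (it only uses Lemma~\ref{det} and $\sum_i b_i^2\le1$), and the sole new input is~\eqref{Korninequality}, used to pass from $\|\bu_0\|_{DG}$ and $\|\divv\bu_0\|$ to $\|\bu_0\|_{1,h}$, hence to $\|\Lambda_3^{1/2}\bp_h\|$, at the cost of the $h$-independent constants $c_0,c_1$. Combined with Theorem~\ref{theorem-bound-dis}, this controls the denominator.

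The heart of the argument is coercivity, $\mathcal{A}_h((\bu_h;\bv_h;\bud_h;\bvd_h;\bp_h),(\bw_h;\bz_h;\bwd_h;\bzd_h;\bq_h))\gtrsim\|(\bu_h;\bv_h;\bud_h;\bvd_h)\|_{\bUd_h\times\bU\times\bVd\times\bV}^2+\|\bp_h\|_{\bP}^2$. Expanding $\mathcal{A}_h$ from~\eqref{dis-Bilinear-form} against the chosen test tuple reproduces, term by term, the expansion in the proof of Theorem~\ref{inf-sup}: the $\Lambda_{uv}$-, $\Lambda_1$-, $\Lambda_2$-, $\Lambda_3$- and combined-divergence ($\Divv\bv_h+\ba\Divu\bu_h$) contributions are purely of $L^2$/$H(\divv)$ type and are treated identically, since $\mathcal{A}_h$ and $\mathcal{A}$ agree on them. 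The only genuine change is that the elasticity contribution is now $\tfrac{\mu\tau^2}{2}a_h(\bu_h,\bw_h)=\tfrac{\mu\tau^2}{2}\bigl(\delta\,a_h(\bu_h,\bu_h)-\tfrac{\tau}{2\sqrt{\gamma}}a_h(\bu_h,\bu_0)\bigr)$: for the diagonal part I would use~\eqref{coercivity:a_h} and~\eqref{Korninequality} to get $a_h(\bu_h,\bu_h)\ge\alpha_a c_1\|\bu_h\|_{DG}^2$, and for the cross part~\eqref{continuity:a_h} gives $|a_h(\bu_h,\bu_0)|\le c_2\|\bu_h\|_{DG}\|\bu_0\|_{DG}$, after which Young's inequality together with $\|\bu_0\|_{DG}^2\le c_0\beta_{s,h}^{-2}\|\Lambda_3^{1/2}\bp_h\|^2$ absorbs it partly into $\|\bu_h\|_{DG}^2$ and partly into $\|\Lambda_3^{1/2}\bp_h\|^2$. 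Choosing the Young parameters $\epsilon_1,\epsilon_2$ proportional to $\beta_{s,h}^{-2}$, $\epsilon_3$ proportional to $\max\{\beta_{v,h}^{-2},\beta_{s,h}^{-2}\}$, and $\delta$ large enough in terms of $\alpha_a,c_0,c_1,c_2,\beta_{s,h},\beta_{v,h}$, one recovers a positive coercivity constant exactly as in Theorem~\ref{inf-sup}, and combining it with the boundedness bound gives $\omega_h$.

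The main obstacle is precisely this coercivity step: in the nonconforming DG setting $a_h(\cdot,\cdot)$ is only coercive in $\|\cdot\|_h$ and only continuous in $\|\cdot\|_{DG}$, so the elasticity cross term $a_h(\bu_h,\bu_0)$ must be routed through the norm equivalence~\eqref{Korninequality} while keeping every constant free of $h$ and $n$, and the coercivity reserve created by $\tfrac{\mu\tau^2}{2}\delta\,a_h(\bu_h,\bu_h)$ must be large enough to dominate \emph{both} that cross term \emph{and} the $\bu_0$-contribution to $\|\bw_h\|_{DG}^2$ in the test-tuple norm, which forces the same delicate balancing of $\delta$ against $\beta_{s,h}^{-2},\beta_{v,h}^{-2}$ as in the continuous proof. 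A secondary but essential point is the admissibility of the discrete divergence constraints, which rests on the pointwise conservation property $\divv\bUd(T)=\divv\bU_i(T)=Q_i(T)$ of the elements under consideration; without it the fields $\bv_0,\bu_0$ need not exist in the finite element spaces and the construction collapses.
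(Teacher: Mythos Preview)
Your proposal is correct and follows essentially the same approach as the paper's own proof: the same discrete auxiliary fields built from~\eqref{divdis}, the same test tuple~\eqref{wzq}, the same algebraic eigenvalue bound $\lambda_{\max}(G)\le 2$, and the same treatment of the DG elasticity term via~\eqref{coercivity:a_h}, \eqref{continuity:a_h}, and~\eqref{Korninequality}. One small omission: in bounding the $\Lambda_{uv}$-contribution $\|\Lambda_{uv}^{1/2}(\tfrac{\tau}{2\sqrt{\gamma}}\bu_0;\tfrac{\tau}{2}\bar{\bA}_{22}^{-1/2}\bv_0;\bm 0;\bm 0)\|^2$ you need the plain $L^2$-norm $\|\bu_0\|$, not $\|\bu_0\|_{DG}$, and passing from $\|\bu_0\|$ to $\|\bu_0\|_{1,h}$ requires the discrete Poincar\'e inequality~\eqref{Poincare} (constant $c_3$), which the paper invokes explicitly; this $c_3$ then also enters the choice of $\epsilon_3$ and $\delta$.
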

\begin{proof}
For any $(\bu_h;\bv_h;\bud_h;\bvd_h;\bp_h)\in \bUd_h\times\bU_h\times \bVd_h\times\bV_h\times \bP_h$, from \ref{divdis}, there exist
\begin{align}
\bv_{0,h} &\in \bU ~~\hbox{such that}~~\Divv\bv_{0,h} = \frac{\tau}{2}\bar{\bA}_{22}^{-\frac{1}{2}}\bp_h ~~\hbox{and}~~ \|\bv_{0,h}\|_{\divv} \leq { \beta_{v,h}^{-1}\|\Lambda_2^{-\frac{1}{2}}\bp_h\|} \label{psi-dis},
\end{align}
also, there exists
\begin{align}
\bu_{0,h} &\in \bUd_h~~\hbox{such that}~~ 
\Divu \bu_{0,h} = \frac{\tau}{2\sqrt{\gamma}} \Lambda_4\ba\bp_h ,~~\|\bu_{0,h}\|_1 \leq \frac{\tau\beta_{s}^{-1}}{2\sqrt{\gamma}}\|\Lambda_4^{\frac{1}{2}}\ba\bp_h\|= \beta_{s,h}^{-1}\|\Lambda_3^{\frac{1}{2}}\bp_h\|.\label{u0-dis}
\end{align}
Choose
\begin{align}
\bw_h = \delta \bu_h - \frac{\tau}{2\sqrt{\gamma}}\bu_{0,h},\quad \bz_h = \delta \bv_h -  \frac{\tau}{2}\bar{\bA}_{22}^{-\frac{1}{2}} \bv_{0,h}, ~~ \bwd_h=\delta\bud_h, ~~\bzd_h=\delta\bvd_h, ~~\bq_h = -\delta \bp_h - \frac{\tau^2}{4}\Lambda^{-1}(\Divv \bv_h+\ba\Divu \bu_h),\label{wzq-dis}
\end{align}
where $\delta$ is a positive constant to be determinant later.\\
Following the proof of Theorem \ref{inf-sup}, we try to estimate $\|\Lambda_{uv}^{\frac{1}{2}} \left(\begin{array}{c}
\frac{\tau}{2\sqrt{\gamma}}\bu_{0,h} \\  \frac{\tau}{2}\bar{\bA}_{22}^{-\frac{1}{2}} \bv_{0,h} \\\textbf{0} \\\textbf{0}
\end{array}\right)\|^2$:
	
\begin{align}
\|\Lambda_{uv}^{\frac{1}{2}} \left(\begin{array}{c}
\frac{\tau}{2\sqrt{\gamma}}\bu_{0,h} \\  \frac{\tau}{2}\bar{\bA}_{22}^{-\frac{1}{2}} \bv_{0,h} \\\textbf{0} \\\textbf{0}
\end{array}\right)&\|^2
\underbrace{\leq}_\eqref{g} \frac{\tau^2}{4}\lambda_{\max}(G)(\|\bu_{0,h}\|^2+\|\bv_{0,h}\|^2) \underbrace{\leq}_\eqref{Poincare} \frac{\tau^2}{4}\lambda_{\max}(G)\left(c_3\beta_{s,h}^{-2}\|\Lambda_3^{\frac{1}{2}}\bp_h\|^2+\beta_{v,h}^{-2}\|\Lambda_2^{-\frac{1}{2}}\bp_h\|^2\right)\nonumber
\\%
&\underbrace{\leq}_\eqref{maxg}\frac{\tau^2}{2}\left(c_3\beta_{s,h}^{-2}\|\Lambda_3^{\frac{1}{2}}\bp_h\|^2+\beta_{v,h}^{-2}\|\Lambda_2^{-\frac{1}{2}}\bp_h\|^2\right) \label{uv-dis}
\end{align}
Let now verify the boundedness of $(\bw_h;\bz_h;\bwd_h;\bzd_h;\bq_h)$ by $(\bu_h;\bv_h;\bud_h;\bvd_h;\bp_h)$. Firstly for  $(\bw_h;\bz_h;\bwd_h;\bzd_h)$ we have,
	
\begin{align*}
\|(\bw_h;\bz_h;\bwd_h;\bzd_h)\|^2_{\bUd_h\times\bU\times\bVd\times\bV} &= \|\left(\delta \bu_h - \frac{\tau}{2\sqrt{\gamma}}\bu_{0,h};\delta \bv_h -  \frac{\tau}{2}\bar{\bA}_{22}^{-\frac{1}{2}} \bv_{0,h};\delta\bud_h;\delta\bvd_h\right)\|^2_{\bUd_h\times\bU\times\bVd\times\bV}\nonumber\\
&= \frac{\tau^2\mu}{2}\|\delta \bu_h - \frac{\tau}{2\sqrt{\gamma}}\bu_{0,h}\|^2_{DG} +\frac{\tau^2\lambda}{4}\|\divv (\delta \bu_h - \frac{\tau}{2\sqrt{\gamma}}\bu_{0,h})\|^2 + \|\Lambda_{uv}^{\frac{1}{2}} \left(\begin{array}{c}
\delta \bu_h - \frac{\tau}{2\sqrt{\gamma}}\bu_{0,h} \nonumber\\\delta \bv_h -  \frac{\tau}{2}\bar{\bA}_{22}^{-\frac{1}{2}} \bv_{0,h}\\\delta\bud_h\\ \delta\bvd_h
\end{array}\right)\|^2\\
& + \frac{\tau^2}{4}\|\Lambda^{-\frac{1}{2}}\left(\Divv (\delta \bv_h -  \frac{\tau}{2}\bar{\bA}_{22}^{-\frac{1}{2}} \bv_{0,h})+\ba\Divu (\delta \bu_h - \frac{\tau}{2\sqrt{\gamma}}\bu_{0,h})\right)\|^2,
\end{align*}
by applying triangle inequality it follows
\begin{align*}
\leq &\tau^2\mu\delta^2\|\bu_h\|^2_{DG} + \tau^2\mu\|\frac{\tau}{2\sqrt{\gamma}}\bu_{0,h}\|^2_{DG}+\frac{\tau^2\delta^2\lambda}{2}\|\divv\bu_h\|^2 +\frac{\tau^2\lambda}{2}\|\divv \frac{\tau}{2\sqrt{\gamma}}\bu_{0,h}\|^2 + 2\delta^2\|\Lambda_{uv}^{\frac{1}{2}} \left(\begin{array}{c}
\bu_h\\ \bv_h \\\bud_h\\ \bvd_h
\end{array}\right)\|^2 
\\&+ 2\|\Lambda_{uv}^{\frac{1}{2}} \left(\begin{array}{c}
\frac{\tau}{2\sqrt{\gamma}}\bu_{0,h} \\ \frac{\tau}{2}\bar{\bA}_{22}^{-\frac{1}{2}} \bv_{0,h} \\\textbf{0} \\\textbf{0}
\end{array}\right)\|^2 + \frac{\tau^2\delta^2}{2}\|\Lambda^{-\frac{1}{2}}\left(\Divv \bv_h + \ba \Divu \bu_h \right)\|^2 + \frac{\tau^2}{2}\|\Lambda^{-\frac{1}{2}}( \frac{\tau}{2}\bar{\bA}_{22}^{-\frac{1}{2}}\Divv  \bv_{0,h}+ \frac{\tau}{2\sqrt{\gamma}}\ba \Divu \bu_{0,h})\|^2, 
\end{align*}
by  \eqref{psi-dis} ,\eqref{u0-dis} , \eqref{uv-dis} , \eqref{Korninequality} and definition of $\g,\Lambda$, we have
\begin{align}
\leq& \tau^2\mu\delta^2\|\bu_h\|^2_{DG} + \frac{c_0\tau^2\beta_{s,h}^{-2}}{2}\|\Lambda_3^{\frac{1}{2}}\bp_h\|^2 +\frac{\tau^2\delta^2\lambda}{2}\|\divv\bu_h\|^2 +\frac{\tau^2\beta_{s,h}^{-2}}{2}\|\Lambda_3^{\frac{1}{2}}\bp_h\|^2 + 2\delta^2\|\Lambda_{uv}^{\frac{1}{2}} \left(\begin{array}{c}
\bu_h\\ \bv_h\\\bud_h\\ \bvd_h
\end{array}\right)\|^2 \nonumber\\&
+ \tau^2 \left(c_3\beta_{s,h}^{-2}\|\Lambda_3^{\frac{1}{2}}\bp_h\|^2+\beta_v^{-2}\|\Lambda_2^{\frac{1}{2}}\bp_h\|^2\right) + \frac{\tau^2\delta^2}{2}\|\Lambda^{-\frac{1}{2}}\left(\Divv \bv_h +\ba \Divu \bu_h \right)\|^2 + \frac{\tau^2}{2}\|\Lambda^{-\frac{1}{2}}\left( (\Lambda_2 + \Lambda_3)\bp_h\right)\|^2  \label{wzbound-dis}
\end{align}
Secondly for $\bq_h$ we have
\begin{align}
\|-\delta \bp_h - \Lambda^{-1}(\Divv \bv_h+\ba\Divu \bu_h)\|^2_{\bP} &= \frac{\tau^2}{4}( \Lambda\left(-\delta \bp_h - \Lambda^{-1}(\Divv \bv_h+\ba\Divu \bu_h)\right), -\delta \bp_h - \Lambda^{-1}(\Divv \bv_h+\ba\Divu \bu_h))\nonumber\\
&\leq \frac{\tau^2\delta^2}{2}\|\Lambda^{\frac{1}{2}}\bp_h\|^2 + \frac{\tau^2}{2}\|\Lambda^{-\frac{1}{2}}(\Divv \bv_h+\ba\Divu \bu_h)\|^2 \label{qbound-dis}
\end{align}
Collecting the estimates \eqref{wzbound-dis} and \eqref{qbound-dis}, we obtain
$$\|(\bw_h;\bz_h;\bwd_h;\bzd_h)\|^2_{\bUd_h\times\bU\times\bVd\times\bV} +\|\bq_h\|^2_{\bP} \leq (2\delta^2 + 2 + (2+2c_0+4c_3)\beta_{s,h}^{-2} + 4\beta_v^{-2})(\|(\bu_h;\bv_h;\bud_h;\bvd_h)\|^2_{\bUd_h\times\bU\times\bVd\times\bV} +\|\bp_h\|^2_{\bP})$$

Stays to show the coercivity of $\mathcal{A}((\bu_h; \bv_h;\bud_h;\bvd_h;\bp_h),(\bw_h; \bz_h;\bwd_h;\bzd_h;\bq_h))$. Using the definition of

$\mathcal{A}((\bu_h; \bv_h;\bud_h;\bvd_h;\bp_h),(\bw_h; \bz_h;\bwd_h;\bzd_h;\bq_h))$ and $(\bw_h; \bz_h;\bwd_h;\bzd_h;\bq_h)$ from~\eqref{wzq-dis}, it follow
\begin{align*}
\mathcal{A}((&\bu_h;\bv_h;\bud_h;\bvd_h;\bp_h),(\bw_h,\bz_h;\bwd_h;\bzd_h;\bq_h))=
\frac{\mu\tau^2}{2}a_h(\bu_h,\bw_h) +\frac{\lambda\tau^2}{4}(\divv \bu_h,\divv \bw_h)+ (\Lambda_{uv} \left(\begin{array}{c}
\bu_h \\\bv_h \\ \bud_h\\ \bvd_h
\end{array}\right),\left(\begin{array}{c}
\bw_h \\\bz_h\\ \bwd_h\\ \bzd_h
\end{array}\right)) \nonumber
\\&-\frac{\tau^2}{4}(\bp_h,\ba\Divu \bw_h+\Divv \bz_h) -\frac{\tau^2}{4}(\ba\Divu \bu_h+\Divv \bv_h,\bq_h)  - \frac{\tau^2}{4}(\Lambda_1 \bp_h,\bq_h)
\\&=\frac{\mu\tau^2}{2}a_h(\bu_h,\delta \bu_h - \frac{\tau}{2\sqrt{\gamma}}\bu_{0,h}) +\frac{\lambda\tau^2}{4}(\divv \bu_h,\divv (\delta \bu_h - \frac{\tau}{2\sqrt{\gamma}}\bu_{0,h}))
+ (\Lambda_{uv} \left(\begin{array}{c}
\bu_h \\\bv_h \\ \bud_h\\ \bvd_h
\end{array}\right),\left(\begin{array}{c}
\delta \bu_h - \frac{\tau}{2\sqrt{\gamma}}\bu_{0,h} \\
\delta \bv_h -  \frac{\tau}{2}\bar{\bA}_{22}^{-\frac{1}{2}} \bv_{0,h}\\
\delta\bud_h\\
\delta\bvd_h
\end{array}\right))
\nonumber\\
& -\frac{\tau^2}{4}(\bp_h,\ba\Divu (\delta \bu_h - \frac{\tau}{2\sqrt{\gamma}}\bu_{0,h})+\Divv (\delta \bv_h -  \frac{\tau}{2}\bar{\bA}_{22}^{-\frac{1}{2}} \bv_{0,h}))
-\frac{\tau^2}{4}(\ba\Divu \bu_h+\Divv \bv_h,-\delta \bp_h - \Lambda^{-1}(\Divv \bv_h+\ba\Divu \bu_h))
\\&  - \frac{\tau^2}{4}(\Lambda_1 \bp_h,-\delta \bp_h - \Lambda^{-1}(\Divv \bv_h+\ba\Divu \bu_h))
\end{align*}
from \eqref{psi-dis} and \eqref{u0-dis}, it follow,
\begin{align*} 
&=\frac{\delta\mu\tau^2}{2}a_h(\bu_h,\bu_h) - \frac{\mu\tau^2}{2}a_h(\bu_h,\frac{\tau}{2\sqrt{\gamma}}\bu_{0,h}) +\frac{\delta\lambda\tau^2}{4}\|\divv \bu_h\|^2 - \frac{\lambda\tau^2}{4}(\divv \bu_h,\divv (\frac{\tau}{2\sqrt{\gamma}}\bu_{0,h}))
+ \delta\|\Lambda_{uv}^{\frac{1}{2}} \left(\begin{array}{c}
\bu_h \\\bv_h \\ \bud_h\\ \bvd_h
\end{array}\right)\|^2
\nonumber\\
& - 
(\Lambda_{uv} \left(\begin{array}{c}
\bu_h \\\bv_h \\ \bud_h\\ \bvd_h
\end{array}\right),\left(\begin{array}{c}
\frac{\tau}{2\sqrt{\gamma}}\bu_{0,h} \\
\frac{\tau}{2}\bar{\bA}_{22}^{-\frac{1}{2}} \bv_{0,h}\\
\textbf{0}\\
\textbf{0}
\end{array}\right)) +\frac{\tau^2}{4}(\bp_h,(\Lambda_2+\Lambda_3)\bp_h)
+\frac{\tau^2}{4}\|\Lambda^{-\frac{1}{2}}(\Divv \bv_h+\ba\Divu \bu_h)\|^2 + \frac{\delta\tau^2}{4}\|\Lambda_1^{\frac{1}{2}} \bp_h\|^2
\\&  + \frac{\tau^2}{4}(\Lambda_1 \bp_h,\Lambda^{-1}(\Divv \bv_h+\ba\Divu \bu_h))
\end{align*}
by using Young's inequality,\eqref{continuity:a_h} and \eqref{coercivity:a_h} we obtain,
\begin{align*}
&\geq\frac{\delta\alpha_a\mu\tau^2}{2}\|\bu_h\|^2_h - 
\frac{\mu\tau^2 c_2\epsilon_1}{4}\|\bu_h\|^2_{DG} - \frac{\mu\tau^2 c_2}{4\epsilon_1}\frac{\tau^2}{4\gamma}\|\bu_{0,h}\|^2_{DG} +\frac{\delta\lambda\tau^2}{4}\|\divv \bu_h\|^2 - \frac{\lambda\tau^2\epsilon_2}{8}\|\divv \bu_h\|^2  
-\frac{\lambda\tau^2}{8\epsilon_2}\frac{\tau^2}{4\gamma}\|\divv \bu_{0,h}\|^2 
\nonumber\\
&
+ \delta\|\Lambda_{uv}^{\frac{1}{2}} \left(\begin{array}{c}
\bu_h \\\bv_h \\ \bud_h\\ \bvd_h
\end{array}\right)\|^2 - \frac{\epsilon_3}{2}\|\Lambda_{uv}^{\frac{1}{2}} \left(\begin{array}{c}
\bu_h \\\bv_h \\ \bud_h\\ \bvd_h
\end{array}\right)\|^2- 
\frac{1}{2\epsilon_3}\|\Lambda_{uv}^{\frac{1}{2}}\left(\begin{array}{c}
\frac{\tau}{2\sqrt{\gamma}}\bu_{0,h} \\
\frac{\tau}{2}\bar{\bA}_{22}^{-\frac{1}{2}} \bv_{0,h}\\
\textbf{0}\\
\textbf{0}
\end{array}\right)\|^2 
+\frac{\tau^2}{4}\|\Lambda_2^{\frac{1}{2}}\bp_h\|^2  +\frac{\tau^2}{4}\|\Lambda_3^{\frac{1}{2}}\bp_h\|^2
\\&+\frac{\tau^2}{4}\|\Lambda^{-\frac{1}{2}}(\Divv \bv_h+\ba\Divu \bu_h)\|^2 + \frac{\delta\tau^2}{4}\|\Lambda_1^{\frac{1}{2}} \bp_h\|^2 - \frac{\tau^2}{8}\|\Lambda^{-\frac{1}{2}}\Lambda_1 \bp_h\|^2
-\frac{\tau^2}{8}\|\Lambda^{-\frac{1}{2}}(\Divv \bv_h+\ba\Divu \bu_h)\|^2
\end{align*}
by using again \eqref{u0-dis}, \eqref{uv-dis} , \eqref{Korninequality}, and the definition of $\g$, we obtain,
\begin{align*}
&\geq\frac{\alpha_a c_1 \delta\mu\tau^2}{2}\|\bu_h\|^2_{DG} - 
\frac{\mu\tau^2 c_0  c_2\epsilon_1}{4}\|\bu_h\|^2_{DG} - \frac{c_2\beta_{s,h}^{-2}\tau^2}{8\epsilon_1}\|\Lambda_3^{\frac{1}{2}}\bp_h\|^2 +\frac{\delta\lambda\tau^2}{4}\|\divv \bu_h\|^2 - \frac{\lambda\tau^2\epsilon_2}{8}\|\divv \bu_h\|^2
-\frac{\beta_{s,h}^{-2}\tau^2}{8\epsilon_2}\|\Lambda_3^{\frac{1}{2}}\bp_h\|^2 
\nonumber\\
&
+ \delta\|\Lambda_{uv}^{\frac{1}{2}} \left(\begin{array}{c}
\bu_h \\\bv_h \\ \bud_h\\ \bvd_h
\end{array}\right)\|^2 - \frac{\epsilon_3}{2}\|\Lambda_{uv}^{\frac{1}{2}} \left(\begin{array}{c}
\bu_h \\\bv_h \\ \bud_h\\ \bvd_h
\end{array}\right)\|^2- 
\frac{1}{2\epsilon_3}\frac{\tau^2}{2} \left(c_3\beta_{s,h}^{-2}\|\Lambda_3^{\frac{1}{2}}\bp_h\|^2+\beta_{v,h}^{-2}\|\Lambda_2^{-\frac{1}{2}}\bp_h\|^2\right)
+\frac{\tau^2}{4}\|\Lambda_2^{\frac{1}{2}}\bp_h\|^2 +\frac{\tau^2}{4}\|\Lambda_3^{\frac{1}{2}}\bp_h\|^2 
\\&
+\frac{\tau^2}{4}\|\Lambda^{-\frac{1}{2}}(\Divv \bv_h+\ba\Divu \bu_h)\|^2 + \frac{\delta\tau^2}{4}\|\Lambda_1^{\frac{1}{2}} \bp_h\|^2 - \frac{\tau^2}{8}\|\Lambda_1^{\frac{1}{2}} \bp_h\|^2 
-\frac{\tau^2}{8}\|\Lambda^{-\frac{1}{2}}(\Divv \bv_h+\ba\Divu \bu_h)\|^2
\end{align*}
Let $\epsilon_1 = 2\beta_{s,h}^{-2}c_2c_0 , \epsilon_2 = 2\beta_{s,h}^{-2}, \epsilon_3 = 4\max\lbrace{\beta_{v,h}^{-2},c_3\beta_{s,h}^{-2}\rbrace}:= 4\beta^{-2}_h$, we obtain
\begin{align*}
&\geq\frac{(\delta\alpha_a c_1-\beta_{s,h}^{-2}c_2^2 c_0)\mu\tau^2}{2}\|\bu_h\|^2_{DG}  - \frac{\tau^2}{16}\|\Lambda_3^{\frac{1}{2}}\bp_h\|^2 +\frac{(\delta-\beta_{s,h}^{-2})\lambda\tau^2}{4}\|\divv \bu_h\|^2 -\frac{\tau^2}{16}\|\Lambda_3^{\frac{1}{2}}\bp_h\|^2 + (\delta-2\beta^{-2}_h)\|\Lambda_{uv}^{\frac{1}{2}} \left(\begin{array}{c}
\bu_h \\\bv_h \\ \bud_h\\ \bvd_h
\end{array}\right)\|^2 \nonumber\\
& - 
\frac{\tau^2}{16} \left(\|\Lambda_3^{\frac{1}{2}}\bp_h\|^2+\|\Lambda_2^{-\frac{1}{2}}\bp_h\|^2\right)  +\frac{\tau^2}{4}\|\Lambda_2^{\frac{1}{2}}\bp_h\|^2 +\frac{\tau^2}{4}\|\Lambda_3^{\frac{1}{2}}\bp_h\|^2
 +\frac{\tau^2}{4}\|\Lambda^{-\frac{1}{2}}(\Divv \bv_h+\ba\Divu \bu_h)\|^2 + \frac{\delta\tau^2}{4}\|\Lambda_1^{\frac{1}{2}} \bp_h\|^2 - \frac{\tau^2}{8}\|\Lambda_1^{\frac{1}{2}} \bp_h\|^2
\\& 
-\frac{\tau^2}{8}\|\Lambda^{-\frac{1}{2}}(\Divv \bv_h+\ba\Divu \bu_h)\|^2
\end{align*}
Let $\delta:=\frac{\max\lbrace\beta_{s,h}^{-2}c_2^2 c_0 ,2\beta^{-2}_h,\beta_{s,h}^{-2}\rbrace}{\min\lbrace\alpha_a c_1,1\rbrace} + \frac{1}{4}$ we obtain Finally,
$$\mathcal{A}((\bu_h; \bv_h;\bud_h;\bvd_h;\bp_h),(\bw_h; \bz_h;\bwd_h;\bzd_h;\bq_h))\geq \frac{1}{4}(\|(\bu_h;\bv_h;\bud_h;\bvd_h)\|^2_{\bUd_h\times\bU\times\bVd\times\bV}+\|\bp_h\|^2_{\bP})$$
\end{proof}
The following stability estimate is a consequence of the above theorem.
\begin{corollary}\label{eq:92}
Let $(\bu_h;  \bv_h;\bud_h;\bvd_h; \bp_h)\in \bUd_h\times \bU_h\times \bVd_h\times \bV_h\times\bP_h$ be the solution of \eqref{a2dis},
then we have the estimate
\begin{equation}
\|(\bu_h;\bv_h;\bud_h;\bvd_h)\|_{\bUd_h\times \bU\times \bVd\times \bV} + \|\bp_h\|_{\bP}\leq C_2 (\|(\bG_1;\bG_2;\bG_3;\bG_4)\|_{\bUd_h^*\times \bU^*\times \bVd^*\times \bV^*}+\|\bG_5\|_{\bP^*}),\label{stability:dis}
\end{equation} 
holds where $$\|(\bG_1;\bG_2;\bG_3;\bG_4)\|_{\bUd_h^*\times \bU^*\times \bVd^*\times \bV^*}=
\sup\limits_{(\bw;\bz;\bwd;\bzd)\in \bUd_h\times \bU_h\times \bVd_h\times \bV_h}\frac{((\bG_1;\bG_2;\bG_3;\bG_4), (\bw;\bz;\bwd;\bzd))}{\|(\bw;\bz;\bwd;\bzd)\|_{\bUd_h\times \bU\times \bVd\times \bV}},\quad \|\bG_5\|_{\bP^*}=
\sup\limits_{\bq\in \bP_h}\frac{(\bG_5,\bq)}{\|\bq\|_{\bP}}=\|\Lambda^{-\frac{1}{2}} \bG_5\|.$$
and $C_2$ is a constant independent of all parameters the network scale  $n$, and the mesh size $h$.
\end{corollary}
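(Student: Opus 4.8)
The plan is to recognize that the fully discrete system~\eqref{a2dis} is exactly the operator equation associated with the bilinear form $\mathcal A_h$ of~\eqref{dis-Bilinear-form}, and then to read off the a priori bound~\eqref{stability:dis} from the parameter-robust inf--sup condition of Theorem~\ref{inf-sup-dis}. First I would introduce the shorthand $x_h:=(\bu_h;\bv_h;\bud_h;\bvd_h;\bp_h)$, $y_h:=(\bw_h;\bz_h;\bwd_h;\bzd_h;\bq_h)$, set $X_h:=\bUd_h\times\bU_h\times\bVd_h\times\bV_h\times\bP_h$ and equip it with the norm
\[
\|y_h\|_{X_h}:=\|(\bw_h;\bz_h;\bwd_h;\bzd_h)\|_{\bUd_h\times\bU\times\bVd\times\bV}+\|\bq_h\|_{\bP}.
\]
Testing $\mathcal A_h(x_h,\cdot)$ successively against the five coordinate blocks of $X_h$ and using the block form of $\Lambda_{uv}$ together with the symmetry $\ba=\ba^{T}$, one checks directly that~\eqref{a2dis} is equivalent to finding $x_h\in X_h$ such that
\[
\mathcal A_h(x_h,y_h)=\langle\bG,y_h\rangle:=(\bG_1,\bw_h)+(\bG_2,\bz_h)+(\bG_3,\bwd_h)+(\bG_4,\bzd_h)+(\bG_5,\bq_h)\qquad\text{for all }y_h\in X_h.
\]

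Next I would apply Theorem~\ref{inf-sup-dis} to the given solution $x_h$ of~\eqref{a2dis}, which yields
\[
\omega_h\,\|x_h\|_{X_h}\le\sup_{y_h\in X_h\setminus\{0\}}\frac{\mathcal A_h(x_h,y_h)}{\|y_h\|_{X_h}}=\sup_{y_h\in X_h\setminus\{0\}}\frac{\langle\bG,y_h\rangle}{\|y_h\|_{X_h}}=:\|\bG\|_{X_h^*},
\]
where the middle equality uses the reformulation above. To finish, I would bound $\|\bG\|_{X_h^*}$: for any $y_h$ the definitions of the two dual norms give
\[
\langle\bG,y_h\rangle\le\|(\bG_1;\bG_2;\bG_3;\bG_4)\|_{\bUd_h^*\times\bU^*\times\bVd^*\times\bV^*}\|(\bw_h;\bz_h;\bwd_h;\bzd_h)\|_{\bUd_h\times\bU\times\bVd\times\bV}+\|\bG_5\|_{\bP^*}\|\bq_h\|_{\bP},
\]
and since both norms on the right are bounded by $\|y_h\|_{X_h}$ — this is where it matters that $\|\cdot\|_{X_h}$ was taken to be the \emph{sum} of the two block norms, so that no spurious constant appears — dividing by $\|y_h\|_{X_h}$ and passing to the supremum gives $\|\bG\|_{X_h^*}\le\|(\bG_1;\bG_2;\bG_3;\bG_4)\|_{\bUd_h^*\times\bU^*\times\bVd^*\times\bV^*}+\|\bG_5\|_{\bP^*}$. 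Together with the previous display this is exactly~\eqref{stability:dis} with $C_2=\omega_h^{-1}$, which is independent of all parameters, of $n$ and of $h$ by Theorem~\ref{inf-sup-dis}. The remaining identity $\|\bG_5\|_{\bP^*}=\|\Lambda^{-\frac12}\bG_5\|$ follows from Riesz representation in the inner product $(\cdot,\cdot)_{\bP}=\frac{\tau^2}{4}(\Lambda\,\cdot,\cdot)$ that defines $\|\cdot\|_{\bP}$, the supremum being attained at $\bq\propto\Lambda^{-1}\bG_5$.

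I do not expect a serious obstacle: all the analytic substance — the uniform inf--sup estimate and the boundedness — is contained in Theorems~\ref{inf-sup-dis} and~\ref{theorem-bound-dis}, which may be taken as given, and the corollary is the routine a priori bound extracted from them. The only points that need a little care are verifying that the identification of~\eqref{a2dis} with the operator equation for $\mathcal A_h$ is exact (the signs, and the symmetry of $\ba$ and $\Lambda_{uv}$), and checking that splitting the combined functional $\bG$ across the $(\bu,\bv,\bud,\bvd)$-block and the $\bp$-block does not cost a parameter- or mesh-dependent factor; both are immediate with the product norm on $X_h$ chosen as above.
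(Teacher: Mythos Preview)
Your proposal is correct and matches the paper's approach: the paper gives no explicit proof, stating only that the estimate is ``a consequence of the above theorem'' (Theorem~\ref{inf-sup-dis}), and your argument spells out precisely this standard derivation of an a~priori bound from a uniform inf--sup condition.
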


\subsection{Consequences}
\subsubsection{Norm-equivalent preconditioner}

\begin{theorem2}
Let $\Lambda=(\tilde{\gamma}_{ij})_{n\times n}, \Lambda^{-1}=(\bar{\gamma}_{ij})_{n\times n}$.
Define 
\begin{equation}
\mathcal{B}:=\begin{bmatrix}
\mathcal{B}^{-1}_{\bu\bv} & \bm 0\\
\bm 0&\mathcal{B}^{-1}_{\bp}
\end{bmatrix}\label{Preconditioner}
\end{equation}
where 
\begin{align*}
\mathcal{B}_{\bu\bv} = \frac{\tau^2}{4}\begin{bmatrix}
\mathcal{\tilde{B}}_{\bu\bv}&\bm 0\\
\bm 0&\bm 0
\end{bmatrix}  +\Lambda_{uv},\qquad
\mathcal{B}_{\bp}&=
\frac{\tau^2}{4}\begin{bmatrix}
\tilde{\gamma}_{11}I&\tilde{\gamma}_{12}I& \dots &\tilde{\gamma}_{1n}I\\
\tilde{\gamma}_{21}I&\tilde{\gamma}_{22}I& \dots &\tilde{\gamma}_{2n}I\\
\vdots&\vdots  &\ddots&\vdots\\
\tilde{\gamma}_{n1}I&\tilde{\gamma}_{n2}I& \dots &\tilde{\gamma}_{nn}I\\
\end{bmatrix}, 
\end{align*}
\begin{align*}
\mathcal{\tilde{B}}_{\bu\bv} :=&
\begin{bmatrix}
- 2\mu\divv\eps - \lambda\nabla\divv - \sum_{i,j=1}^{n}\alpha_i\bar{\gamma}_{ij}\alpha_j\nabla\divv&&-\sum_{i=1}^{n}\alpha_i\bar{\gamma}_{i1}\nabla\divv & \cdots & -\sum_{i=1}^{n}\alpha_i\bar{\gamma}_{in}\nabla\divv \\\\
-\sum_{i=1}^{n}\alpha_i\bar{\gamma}_{i1}\nabla\divv && -\bar{\gamma}_{11}\nabla\divv & \cdots& -\bar{\gamma}_{1n}\nabla\divv \ \\
\vdots &&\vdots & \ddots & \vdots \\
-\sum_{i=1}^{n}\alpha_i\bar{\gamma}_{in}\nabla\divv &&-\bar{\gamma}_{n1}\nabla\divv & \cdots &  -\bar{\gamma}_{nn}\nabla\divv\\
\end{bmatrix} 
\end{align*}
	
Emulating from the theory presented in~\cite{Mardal2011preconditioning}, 
Theorems~\ref{theorem-bound} and~\ref{inf-sup} imply that the operator~$\mathcal{B}$ in~\eqref{Preconditioner}	defines a norm-equivalent (canonical) block-diagonal preconditioner for the operator $\mathcal{A}$ 
which is robust in all model parameters.
\end{theorem2}

\begin{theorem2}
Let $\textbf{W}_h :=\bUd_h\times\bU_h\times\bVd_h\times\bV_h\times\bP_h$ be equipped with the norm $\|\cdot\|^2_{\bm W_h}:=\|\cdot\|^2_{\bUd_h\times\bU\times\bVd\times\bV} + \|\cdot\|^2_{\bP}$ and consider the operator 
{
\begin{align*}
\mathcal{A}_h:=\left[ \begin{array}{ccccccccccccccc}
-\divv_h\sig_h + \g_u & -\g_1 &  \cdots & -\g_n & &-2\tau^{-1}& &0&\cdots&0& \alpha_1\nabla_h & \cdots  & \alpha_n\nabla_h\\\\
-\g_1 & \g_{v,1}& \cdots &0 & &0& &-2\tau^{-1}&\cdots&0&\nabla_h  & \cdots & 0\\
\vdots&  &\ddots &\vdots & & \vdots& & \vdots&\ddots& \vdots& \vdots & \ddots & \vdots\\
-\g_n & 0& \cdots &\g_{v,n} & &0& &0&\cdots&-2\tau^{-1}& 0 & \cdots  & \nabla_h\\\\
-2\tau^{-1} & 0    &\cdots&0     & &1      & &0     &\cdots&0     & 0    & \cdots & 0\\\\
0     &-2\tau^{-1}&\cdots&0          & &0      & &1     &\cdots&0     & 0    & \cdots & 0\\
\vdots&\vdots     &\ddots&\vdots     & &\vdots & &\vdots&\ddots&\vdots&\vdots& \ddots & \vdots\\
0     & 0         &\cdots&-2\tau^{-1}& &0      & &0     &\cdots&1     &0     & \cdots & 0\\\\
-\alpha_1\divv_h & -\divv_h& \cdots &0      & &0     & &0&\cdots&0& -\beta_{11}  & \cdots & \beta_{1n}\\
\vdots        & \vdots&\ddots  &\vdots & &\vdots& &\vdots & \ddots & \vdots& \vdots & \ddots & \vdots\\
-\alpha_n\divv_h & 0     & \cdots &-\divv_h & &0     & &0&\cdots&0& \beta_{n1}  & \cdots & -\beta_{nn}\\
\end{array}\right] 
\end{align*}
}
induced by the bilinear form \eqref{dis-Bilinear-form}. Clearly, $\mathcal{A}_h$ is self-adjoint and indefinite on $\textbf{W}_h$. Moreover, Theorems \ref{theorem-bound-dis} and \ref{inf-sup-dis}
imply that it is a uniform isomorphism in the sense of being bounded and having a bounded inverse with bounds independent of the mesh size, the network scale, and the model parameters. Following the framework in the study of Mardal et al.\cite{Mardal2011preconditioning}, we define the self-adjoint positive definite operator 
\begin{equation}\label{Preconditioner:B}
\mathcal{B}_h:=\left[\begin{array}{ccc}
\mathcal{B}_{h,\bm{uv}}^{-1}  & \bm 0 \\
	
\bm 0& \mathcal{B}_{h,\bm p}^{-1}  
\end{array}
\right],
\end{equation}
where 
\begin{align*}
\mathcal{B}_{h,\bu\bv} = \frac{\tau^2}{4}\begin{bmatrix}
\mathcal{\tilde{B}}_{h,\bu\bv}&\bm 0\\
\bm 0&\bm 0
\end{bmatrix}  +\Lambda_{uv},\qquad
\mathcal{B}_{\bp_h}&=\frac{\tau^2}{4}
\begin{bmatrix}
\tilde{\gamma}_{11}I&\tilde{\gamma}_{12}I& \dots &\tilde{\gamma}_{1n}I\\
\tilde{\gamma}_{21}I&\tilde{\gamma}_{22}I& \dots &\tilde{\gamma}_{2n}I\\
\vdots&\vdots  &\ddots&\vdots\\
\tilde{\gamma}_{n1}I&\tilde{\gamma}_{n2}I& \dots &\tilde{\gamma}_{nn}I\\
\end{bmatrix}, 
\end{align*}
{
\begin{align*}
\mathcal{\tilde{B}}_{h,\bu\bv} :=&
\begin{bmatrix}
- 2\mu\divv_h\eps - \lambda\nabla_h \divv_h - \sum_{i,j=1}^{n}\alpha_i\bar{\gamma}_{ij}\alpha_j\nabla_h \divv_h&&-\sum_{i=1}^{n}\alpha_i\bar{\gamma}_{i1}\nabla_h \divv_h & \cdots & -\sum_{i=1}^{n}\alpha_i\bar{\gamma}_{in}\nabla_h \divv_h \\\\
-\sum_{i=1}^{n}\alpha_i\bar{\gamma}_{i1}\nabla_h \divv_h && -\bar{\gamma}_{11}\nabla_h \divv_h & \cdots& -\bar{\gamma}_{1n}\nabla_h \divv_h \ \\
\vdots &&\vdots & \ddots & \vdots \\
-\sum_{i=1}^{n}\alpha_i\bar{\gamma}_{in}\nabla_h \divv_h &&-\bar{\gamma}_{n1}\nabla_h \divv_h & \cdots &  -\bar{\gamma}_{nn}\nabla_h \divv_h\\
\end{bmatrix}
\end{align*}
}
It is obvious that
$$\langle \mathcal{B}_h^{-1} \bx_h,\bx_h\rangle \eqsim \|\bx_h\|^2_{\bm W_h},$$
where $\bx_h = (\bu_h, \bv_h,\bud_h, \bvd_h, \bp_h) \in \bm W_h$ "$\eqsim$" stands for a norm equivalence, uniform with respect to model and discretization
parameters; and $\langle\cdot,\cdot\rangle$ expresses the duality pairing between $\bm W_h$ and $\bm W_h^*$, that is,$\mathcal{B}_h^{-1}$ is a uniform isomorphism.
By using the properties of $\mathcal{B}_h$ and $\mathcal{A}_h$ when solving the generalized eigenvalue problem $\mathcal{A}_h\bx_h =\xi \mathcal{B}_h^{-1}\bx_h$, the condition number $\kappa(\mathcal{B}_h\mathcal{A}_h)$ is easily shown to be uniformly bounded with respect to the all parameter, the network scale $n$, and the mesh size $h$. Therefore, $\mathcal{B}_h$ defines a uniform preconditioner.
\end{theorem2}

\section{Conclusions}\label{conclusion}
In this paper, we analyze the stability properties of the time-discrete systems arising from second-order implicit time stepping
schemes applied to the variational formulation of the MPET model and prove an inf-sup condition with a constant
that is independent of all model parameters. Moreover, we show that the fully discrete models obtained for a family
of strongly conservative space discretizations are also uniformly stable with respect to the spatial discretization
parameter. The norms in which these results hold are the basis for parameter-robust preconditioners
The transfer of the canonical (norm-equivalent) operator
preconditioners from the continuous and the discrete level lays the foundation for optimal and fully robust iterative solution methods.

\section*{Acknowledgement}
I would like to show my deep appreciation to my PHD supervisor Johannes Kraus and Maria Lymbery for thier guidance and help.
\bibliographystyle{elsarticle-num} 
\bibliography{mpet_dynamic}
\end{document}